\newcommand{\R}{\mathbb{R}}
\renewcommand{\H}{\mathbb{H}}
\newcommand{\g}{\gamma}
\renewcommand{\epsilon}{\varepsilon}
\renewcommand{\hat}{\widehat}
\renewcommand{\tilde}{\widetilde}
\newtheorem{theorem}{Theorem}[section]
\newtheorem{proposition}[theorem]{Proposition}
\newtheorem{lemma}[theorem]{Lemma}
\newtheorem*{question}{Question}
\theoremstyle{definition}
\newtheorem{definition}[theorem]{Definition}
\theoremstyle{remark}
\numberwithin{equation}{section}
\newcommand{\diam}{\mathop\mathrm{diam}\nolimits}
\title[The finiteness principle for curves in the Heisenberg group]{Whitney's Extension Theorem and the finiteness principle for curves in the Heisenberg group}
\author[Scott Zimmerman]{Scott Zimmerman}
\address{Department of Mathematics, 
The Ohio State University at Marion,
1465 Mt Vernon Ave, Marion, OH 43302, United States}
\email{zimmerman.416@osu.edu}
\subjclass{58C25, 53C17.}
\begin{document}

\begin{abstract}
Consider the sub-Riemannian Heisenberg group $\H$.
In this paper, we answer the following question:
given a compact set $K \subseteq \R$ and a continuous map $f:K \to \H$,
when is there a horizontal $C^m$ curve $F:\R \to \H$ such that
$F|_K = f$?
Whitney originally answered this question for real valued mappings, 
and
Fefferman provided a complete answer for real valued functions defined on subsets of $\R^n$. 
We also prove a finiteness principle
for $C^{m,\sqrt{\omega}}$ horizontal curves in the Heisenberg group in the sense of Brudnyi and Shvartsman. 
\end{abstract}

\keywords{Finiteness principle; Whitney extension theorem; Heisenberg group}

\date{\today}

\maketitle

\tableofcontents

\newpage

\section{Introduction}

Fix a compact set $K \subseteq \mathbb{R}^n$ and a continuous function $f:K \to \R$,
and consider the following question:

\smallskip

\noindent
\textbf{Whitney's question.} When is there some $F \in C^m(\R^n)$ such that $F|_K = f$?

\smallskip

In other words, when do we have $f \in C^m(K)$?
(For all definitions of the relevant function and trace spaces, see Subsection~\ref{sec-function}.)
In his classical extension theorem \cite{Whitney},
Whitney proved that $f \in C^m(K)$ if and only if
there is a collection
of functions $( f_\alpha )_{|\alpha| \leq m}$
which act as the partial derivatives of $f = f_0$ in the sense of Taylor's theorem.
Moreover, the extension $F$ can be chosen in such a way that $\partial^\alpha F = f_\alpha$ on $K$.
See Theorem~\ref{t-WhitClassic} below for the statement of Whitney's result in $\R$.

A $C^1$ version of this result was proven in \cite{FraSerSer}
for real valued mappings defined on subsets of the 
sub-Riemannian Heisenberg group $\H$.
One naturally then considers the problem of smoothly extending
a map from a subset of Euclidean space {\em into} $\H$.
In this setting, however, one also requires that the extension 
respects
the sub-Riemannian geometry of $\H$.
As noted in Proposition~4.1 from \cite{ZimSpePinWhitney},
Whitney's classical assumptions 
do not suffice to guarantee 
the existence of such an extension in this setting,
and, as such, more assumptions on $f$ are required.

A version of
Whitney's classical extension theorem 
from \cite{Whitney}
was proven by the author 
for horizontal $C^1$ curves in the Heisenberg group \cite{ZimWhitney}
and by Pinamonti, Speight, and the author for horizontal $C^m$ curves \cite{ZimSpePinWhitney}.
See Theorem~\ref{t-HeisWhit} for the statement of and more discussion regarding these results.
This extension theorem has since been applied to verify the existence of 
Lusin-type approximations of Lipschitz curves in the Heisenberg group by horizontal $C^m$ curves \cite{PinGarCap}.
Whitney-type extensions for horizontal $C^1$
curves 
in general Carnot groups and sub-Riemannian manifolds
have been considered by Julliet, Sacchelli, and Sigalotti \cite{Pliability,SaccSiga}.

Let us return to Whitney's question.
We would like an answer
without having first assigned a family of
``derivatives'' of $f$ on $K$.
In the case $n=1$, Whitney provided an answer
using the divided differences of $f$.
\begin{theorem}[\cite{Whitney2}]
\label{t-WhitFin}
Suppose $K \subseteq \mathbb{R}$ is compact
and $f:K \to \R$ is continuous.
Then $f \in C^m(K)$ if and only if 
the $m$th divided differences 
of $f$ converge uniformly on $K$.
\end{theorem}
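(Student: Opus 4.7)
The strategy is to reduce Theorem~\ref{t-WhitFin} to the classical one-variable Whitney extension theorem (Theorem~\ref{t-WhitClassic}) by building a Whitney jet on $K$ out of the divided differences of $f$. Recall that for distinct points $x_0,\ldots,x_k \in K$,
$$ f[x_0,\ldots,x_k] = \sum_{j=0}^{k} \frac{f(x_j)}{\prod_{i\neq j}(x_j-x_i)}, $$
and the two identities I rely on throughout are the recursion $(x_k-x_0)\,f[x_0,\ldots,x_k] = f[x_1,\ldots,x_k]-f[x_0,\ldots,x_{k-1}]$ together with Newton's interpolation formula.

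The forward direction is short: if $F\in C^m(\R)$ extends $f$, the mean value theorem for divided differences gives $f[x_0,\ldots,x_m] = F^{(m)}(\xi)/m!$ for some $\xi$ in the convex hull of the nodes, and uniform continuity of $F^{(m)}$ on a compact neighborhood of $K$ immediately upgrades this to uniform convergence of the $m$-th divided differences.

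For the reverse direction, assume the $m$-th divided differences of $f$ converge uniformly on $K$. I would first argue, via iteration of the recursion above, that the $k$-th divided differences also converge uniformly on $K$ for every $0 \le k \le m$: a change of a single node alters the $k$-th divided difference by at most the spread of the nodes times the $(k+1)$-th divided difference, so uniform convergence propagates downward through a telescoping argument. Denote the resulting limits by $f_k/k!$, so that $f_0=f$ and each $f_k:K\to\R$ is continuous. The next step is to verify the Whitney compatibility conditions
$$ f_k(y) - \sum_{j=0}^{m-k} \frac{f_{k+j}(x)}{j!}(y-x)^j = o(|y-x|^{m-k}) $$
uniformly as $|x-y|\to 0$ in $K$. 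The tool is Newton's interpolation formula applied to node configurations approaching $(x,\ldots,x,y,\ldots,y)$: the polynomial part reproduces the Taylor sum above, with each $f_{k+j}(x)/(k+j)!$ realized as a limit of divided differences as nodes coalesce to $x$, while the remainder factors as $\prod_i(y-x_i)$ times a single $m$-th divided difference whose oscillation near $(x,y)$ vanishes by hypothesis. With $(f_0,\ldots,f_m)$ in hand as a Whitney jet on $K$, Theorem~\ref{t-WhitClassic} produces an $F\in C^m(\R)$ with $F|_K=f$.

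The chief obstacle is the bookkeeping in this reverse direction: a single uniform-convergence hypothesis posited only at order $m$ must be parlayed into continuous functions at every lower order and then into a whole family of Taylor-type estimates linking them. This requires a careful choice of auxiliary node configurations and uniform control of the Newton remainder; it is where the hypothesis does its real work, and where the one-dimensional nature of $K$ is essential, since the divided-difference formalism and the coalescing-node calculus rest on the ordering of $\R$.
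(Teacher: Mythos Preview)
The paper does not supply its own proof of Theorem~\ref{t-WhitFin}; it is stated with a citation to Whitney's original article~\cite{Whitney2} and used as a black box throughout (in particular in the proofs of Theorems~\ref{t-supermain}, \ref{t-supermainWf}, and \ref{t-finiteness}). So there is nothing in the paper to compare your argument against.

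That said, your sketch follows the classical route that Whitney himself took: use the mean value formula for divided differences for the forward direction, and for the converse build the jet $(f_0,\ldots,f_m)$ from limits of divided differences, verify the Whitney compatibility estimates via Newton interpolation with coalescing nodes, and invoke Theorem~\ref{t-WhitClassic}. The one place where your outline is a bit thin is the downward propagation step: to pass from uniform convergence of the $m$-th divided differences to uniform convergence of the $(m-1)$-th, your telescoping argument needs not just the recursion but also a uniform \emph{bound} on the $m$-th divided differences for small configurations. This bound does follow from the hypothesis (the limit function is continuous on the compact set $K$, and uniform convergence controls nearby configurations), but it deserves an explicit sentence. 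With that filled in, your plan is the standard proof.
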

See Subsection~\ref{s-DD} for more information about divided differences.
Glaeser later provided an answer to Whitney's question for functions in $C^1(\R^n)$
using a geometric argument \cite{GlaeserWhit}.
The proof of Theorem~\ref{t-WhitFin} actually implies the following.
\begin{theorem}[\cite{Whitney2}]
\label{t-WhitCheat}
Suppose $K \subseteq \R$ is compact.
Then there is a bounded, linear operator $W:C^m(K) \to C^m(\R)$ such that $Wf|_K = f$ for all $f \in C^m(K)$.
\end{theorem}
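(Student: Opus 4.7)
The plan is to revisit the construction behind Theorem~\ref{t-WhitFin} and check that, for a fixed compact $K$, it can be arranged to depend linearly and boundedly on $f$. Two pieces go into this construction: first, a jet $(f_0, \ldots, f_m)$ of candidate derivatives for $f$ along $K$; second, a Whitney-type gluing of Taylor polynomials across the complement $\R \setminus K$. I would verify linearity and boundedness of each piece.

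For the jet, I would set $f_j(x)$ at an accumulation point $x \in K$ to be $j!$ times the limit of the $j$-th divided difference of $f$ at points of $K$ approaching $x$; the hypothesis of Theorem~\ref{t-WhitFin} guarantees this limit exists, and since divided differences are linear in function values, the assignment $f \mapsto f_j$ is linear. At isolated points of $K$ I would set $f_j \equiv 0$ for $j \geq 1$, which is trivially linear. For the extension itself, I would fix once and for all (depending only on $K$) a Whitney decomposition $\R \setminus K = \bigsqcup_i I_i$, a subordinate smooth partition of unity $\{\phi_i\}$, and anchor points $x_i \in K$ with $\dist(x_i, I_i) \lesssim |I_i|$, and then define
\[
(Wf)(t) = \sum_i \phi_i(t) \sum_{j=0}^m \frac{f_j(x_i)}{j!}(t - x_i)^j \quad \text{for } t \in \R \setminus K,
\]
and $Wf|_K = f$. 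Because the geometric data depend only on $K$ and the jet is linear in $f$, the resulting operator $W$ is linear. That $Wf \in C^m(\R)$ under the hypothesis of Theorem~\ref{t-WhitFin} is exactly the content of Whitney's original argument.

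For boundedness, given any $G \in C^m(\R)$ with $G|_K = f$, the mean-value identity for divided differences dominates the $j$-th divided difference of $f$ by $\|G^{(j)}\|_\infty$, and Whitney's quantitative estimates on the decomposition then yield $\|Wf\|_{C^m(\R)} \leq C(K)\|G\|_{C^m(\R)}$. Taking the infimum over $G$ gives $\|Wf\|_{C^m(\R)} \leq C(K)\|f\|_{C^m(K)}$. I expect the main obstacle to be managing the lower-order ambiguity cleanly: at an isolated point the zero choice need not agree with $G^{(j)}(x)$ for $j < m$, so the difference of Taylor polynomials between an isolated anchor and a neighboring one is not of the clean order $|x_i - x_k|^m$ that drives Whitney's estimate, and the discrepancy must be absorbed through derivatives of the partition of unity. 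Allowing the constant $C(K)$ to depend on the isolation scales of $K$ makes this harmless, and is what lets one dispense with any normalization of the derivatives at isolated points.
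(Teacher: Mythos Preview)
The paper does not supply a proof here; it simply attributes the theorem to Whitney and remarks that it is implicit in the proof of Theorem~\ref{t-WhitFin}. Your outline follows the right broad shape---build a jet from divided differences, then glue Taylor polynomials over a Whitney decomposition of $\R\setminus K$---but your treatment of isolated points contains a genuine gap, and your final paragraph does not repair it.

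The seminorm in this paper is $\|F\|_{C^m(\R)}=\sup_x|F^{(m)}(x)|$, involving only the top derivative. With that seminorm, assigning $f_j=0$ for $j\ge 1$ at each isolated point cannot give a bounded operator. Take $m\ge 2$, $K=\{0\}\cup[1,2]$, and let $f$ be the restriction to $K$ of a polynomial $p$ of degree $m-1$. Then $\|f\|_{C^m(K)}=0$, since $p$ itself is an admissible extension with $p^{(m)}\equiv 0$. Your divided-difference jets at points of $[1,2]$ recover the Taylor expansion of $p$, while at the isolated point $0$ your jet is $(p(0),0,\dots,0)$. The glued extension $Wf$ must therefore transition on the complementary interval $(0,1)$ between two distinct polynomials of degree $\le m-1$, forcing $(Wf)^{(m)}\not\equiv 0$. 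Hence $\|Wf\|_{C^m(\R)}>0=C\cdot\|f\|_{C^m(K)}$ for every finite $C$; no dependence of $C$ on ``isolation scales'' can rescue an inequality of the form $\text{positive}\le C\cdot 0$. Your appeal to $\|G^{(j)}\|_\infty$ for $j<m$ is exactly what the seminorm does not give you.

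The fix is to make the operator reproduce polynomials of degree $\le m-1$. One clean route, closer to Whitney's actual one-variable argument, is to dispense with jets at isolated points altogether: on each bounded complementary interval, use (a smoothed version of) the interpolating polynomial through the $m+1$ points of $K$ nearest that interval, and blend between adjacent intervals. This is still linear in $f$, and by the mean-value theorem for divided differences every estimate is controlled by $\sup|G^{(m)}|$ alone. Equivalently, if you prefer the jet-then-glue picture, define $f_j$ at an isolated point $x$ to be the $j$-th Taylor coefficient at $x$ of the interpolating polynomial through $x$ and its $m$ nearest neighbours in $K$; this assignment is linear in $f$, agrees with the divided-difference limits at accumulation points, and is exact on polynomials of degree $\le m-1$.
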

Brudnyi and Shvartsman \cite{BruShv,BruShv4}
observed the following reformulation of Whitney's result which has since come to be known as the {\em finiteness principle}.
\begin{theorem}[\cite{Whitney2}]
\label{t-brush}
Suppose $K \subseteq \R$ is compact and $f:K \to \R$ is continuous.
There is some $F \in C^{m,\omega}(\R)$ with $F = f$ on $K$
if and only if there is a constant $M>0$ such that,
for every subset $X \subseteq K$ with $\# X = m+2$,
there is some $F_X \in C^{m,\omega}(\R)$ such that 
$F_X = f$ on $X$ and $\Vert F_X \Vert_{C^{m,\omega}(\R)} \leq M$.
\end{theorem}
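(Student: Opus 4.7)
The plan is to reduce to the $C^{m,\omega}$ refinement of Whitney's Theorem~\ref{t-WhitFin} via divided differences. The forward direction is immediate: if $F \in C^{m,\omega}(\R)$ extends $f$, I simply set $F_X := F$ for every $X$, so $\sup_X \|F_X\|_{C^{m,\omega}(\R)} = \|F\|_{C^{m,\omega}(\R)} < \infty$.

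For the reverse direction, assume that for every $(m+2)$-point subset $X \subseteq K$ there is $F_X \in C^{m,\omega}(\R)$ with $F_X|_X = f|_X$ and $\|F_X\|_{C^{m,\omega}(\R)} \leq M$. I would exploit two standard identities for the divided difference $[x_0,\ldots,x_k]g$: (i) the endpoint recursion $[x_0,\ldots,x_{m+1}]g = ([x_1,\ldots,x_{m+1}]g - [x_0,\ldots,x_m]g)/(x_{m+1}-x_0)$, and (ii) the mean value representation $[x_0,\ldots,x_m]g = g^{(m)}(\xi)/m!$ for some $\xi$ in the convex hull of the nodes, valid for $g \in C^m$.

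Given any two $(m+1)$-point subsets $Y, Y' \subseteq K$ that differ in a single point, set $X := Y \cup Y'$, which has $m+2$ elements. Since $F_X$ agrees with $f$ on $X$, the $m$th divided differences of $f$ on $Y$ and on $Y'$ coincide with those of $F_X$; applying (ii) and using the modulus of continuity $\omega$ of $F_X^{(m)}$ yields
\[
|[Y]f - [Y']f| \;=\; \tfrac{1}{m!}\,|F_X^{(m)}(\xi) - F_X^{(m)}(\eta)| \;\leq\; \tfrac{M}{m!}\,\omega(\diam X),
\]
for some $\xi,\eta$ in the convex hull of $X$. Chaining at most $m+1$ such single-point swaps between arbitrary $(m+1)$-subsets $Y$ and $Y'$ of $K$, keeping all intermediate subsets inside $Y \cup Y'$, then gives
\[
|[Y]f - [Y']f| \;\leq\; C_m\, M\,\omega\bigl(\diam(Y\cup Y')\bigr)
\]
with $C_m$ depending only on $m$. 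In particular, the $m$th divided differences of $f$ converge uniformly on $K$ with modulus of convergence controlled by $\omega$.

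The last step is to invoke the $C^{m,\omega}$-refinement of Whitney's theorem: the divided-difference control above characterizes membership in $C^{m,\omega}(K)$, and the $\omega$-tracking version of the extension operator of Theorem~\ref{t-WhitCheat} produces $F \in C^{m,\omega}(\R)$ with $F|_K = f$. The main obstacle is exactly this refinement, since Theorems~\ref{t-WhitFin} and \ref{t-WhitCheat} as stated only guarantee uniform convergence of divided differences; one must revisit Whitney's construction and carry $\omega$ through the estimates, ensuring that the resulting extension has $m$th derivative with modulus of continuity comparable to $M\omega$. By contrast, the chaining step is routine bookkeeping that converts the $(m+2)$-point finiteness hypothesis into the required control of $m$th divided differences on $(m+1)$-subsets.
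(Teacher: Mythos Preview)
Your proposal is correct and follows essentially the standard route. Note that the paper does not actually prove Theorem~\ref{t-brush}; it is cited from \cite{Whitney2} (as reformulated in \cite{BruShv}) without proof. The only place the argument is exercised is inside the proof of Theorem~\ref{t-finiteness}, where the paper bounds the $(m{+}1)$st divided difference $\gamma[X]$ for $\#X=m{+}2$ in a single step---using the recursion \eqref{e-dddefine} and the mean value theorem for divided differences applied to $\Gamma_X$---and then invokes Theorem~A of \cite{BruShv} to obtain the $C^{m,\omega}$ extension. Your chaining argument through single-point swaps of $(m{+}1)$-subsets is an equivalent reformulation of that one-step bound; the paper's version is slightly more economical but amounts to the same thing. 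Your identification of the main obstacle---carrying the modulus $\omega$ through Whitney's extension construction---is precisely what the paper outsources to \cite{BruShv}.
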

The statement of Theorem~\ref{t-brush} does not involve divided differences
and allows one to consider Whitney's theorem for higher dimensional domains.
As a result, Brudnyi and Shvartsman generalized the finiteness principle to $C^{1,\omega}(\R^n)$
functions in \cite{BruShv}.
Their continued work on this problem
can be found in 
\cite{BruShv2,BruShv,BruShv5,BruShv6,Brud1,BruShv3,Shv1,Shv2,Shv3}.
In \cite{BMP1,BMP2}, Bierstone, Milman, and Paw\l{}ucki
considered Whitney's question for extensions from subanalytic sets in $\R^n$,
and Fefferman answered Whitney's question fully in \cite{FefferWhitFull,FefferWhitLinear}.
He also proved versions of the finiteness principle for $C^{m,\omega}(\R^n)$ functions 
\cite{FefferWhit,FefferWhit2}.
Recent updates on this project 
by Fefferman, Israel, and Luli
can be found in \cite{FefIsrLul,FefIsrLul2}
and by 
Carruth, Frei-Pearson, Israel, and Klartag in \cite{CoordFree}.
An extensive history of work related to Whitney's question from the past few decades can be found in \cite{FefferSummary}.

In this paper, we will focus on mappings into
the sub-Riemannian Heisenberg group $\H^n$
and consider a Heisenberg-version of Whitney's question.
(See Subsection~\ref{s-Heis} for the appropriate definitions.)
Suppose $K \subseteq \R^n$ is compact,
and fix $f:K \to \R^{2k+1}$.

\smallskip

\noindent
\textbf{Whitney's question in $\H$.} When is there a map $F \in C^m (\R^n,\R^{2k+1})$ such that $F|_K = f$
and $F$ is horizontal?

\smallskip

For the purposes of this paper, we will consider only the setting $\H := \H^1 = \R^3$.
However, all results discussed below hold in higher dimensional Heisenberg groups
with the appropriate changes in notation.

As mentioned above, 
Whitney's question in $\H$ was answered on subsets of $\R$
in \cite{ZimSpePinWhitney} and \cite{ZimWhitney} 
when the derivatives of the extension
are required to have prescribed values on $K$.
This is an analogue of Whitney's original result from \cite{Whitney}.
Now, we will provide an answer to Whitney's question in $\H$
in the case $n=1$
in analogy to Theorem~\ref{t-WhitFin}.

The following are the main results of this paper.
For a compact set $K \subseteq \R$ and a map $\g:K \to \H$,
we will write $\g \in C_{\mathbb{H}}^{m}(K)$ to indicate that
there is a horizontal curve $\Gamma \in C^m(\R,\R^3)$
with $\Gamma|_K = \g$.
\begin{theorem}
\label{c-m1}
Assume $K \subseteq \mathbb{R}$ is compact, 
and suppose $\g:K \to \H$ is continuous.
Then $\g \in C_{\mathbb{H}}^1(K)$ if and only if
the Pansu difference quotients of $\gamma$ converge uniformly on $K$ to horizontal points.
\end{theorem}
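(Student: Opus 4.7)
The plan is to convert the Pansu-quotient hypothesis into a Whitney $1$-jet on $K$ and then apply the Heisenberg Whitney extension theorem for horizontal $C^1$ curves (Theorem~\ref{t-HeisWhit}). Throughout, I will use the standard convention under which, for distinct $a,b \in K$, the Pansu difference quotient of $\g$ at $(a,b)$ has first two components $(\g_i(b)-\g_i(a))/(b-a)$ for $i=1,2$ and third component
$$\frac{\g_3(b) - \g_3(a) - \tfrac12\bigl(\g_1(a)\g_2(b) - \g_2(a)\g_1(b)\bigr)}{(b-a)^2}.$$

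For necessity, suppose $\Gamma \in C^1(\R,\R^3)$ is horizontal with $\Gamma|_K = \g$. The first two components of the Pansu quotient are ordinary divided differences of $\Gamma_1, \Gamma_2$, which converge uniformly on $K$ to $\Gamma_1'(a), \Gamma_2'(a)$ by uniform continuity of $\Gamma'$ on a compact neighborhood. For the third, the horizontality relation $\Gamma_3' = \tfrac12(\Gamma_1 \Gamma_2' - \Gamma_2 \Gamma_1')$ lets me rewrite the numerator as
$$\tfrac12\int_a^b\bigl((\Gamma_1(t)-\Gamma_1(a))\Gamma_2'(t) - (\Gamma_2(t)-\Gamma_2(a))\Gamma_1'(t)\bigr)\,dt,$$
which, using $\Gamma_i(t)-\Gamma_i(a) = \Gamma_i'(a)(t-a) + o(|t-a|)$ and uniform continuity of $\Gamma'$ to cancel the leading terms, is $o((b-a)^2)$ uniformly in $a \in K$. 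Hence the uniform limit of the Pansu quotients is the horizontal point $(\Gamma_1'(a), \Gamma_2'(a), 0)$.

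For sufficiency, uniform convergence of the Pansu quotients to horizontal limits yields a continuous map $v = (v_1, v_2) \colon K \to \R^2$ (setting $v$ arbitrarily at isolated points of $K$, which does not affect the relevant estimates). I would form the horizontal $1$-jet
$$\phi(a) := \bigl(v_1(a),\, v_2(a),\, \tfrac12(\g_1(a)v_2(a) - \g_2(a)v_1(a))\bigr),$$
and verify that $(\g,\phi)$ satisfies the hypotheses of Theorem~\ref{t-HeisWhit}. Continuity of $\phi$ is immediate from continuity of $\g$ and $v$, and the Whitney conditions on the first two coordinates follow directly from uniform convergence of the first two components of the Pansu quotient to $v$.

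The main obstacle is the third-coordinate Whitney condition, which I would establish via the identity
\begin{align*}
\g_3(b) - \g_3(a) - \phi_3(a)(b-a)
&= \tfrac12\g_1(a)\bigl((\g_2(b) - \g_2(a)) - (b-a)v_2(a)\bigr) \\
&\quad - \tfrac12\g_2(a)\bigl((\g_1(b) - \g_1(a)) - (b-a)v_1(a)\bigr) \\
&\quad + (b-a)^2 P_3(a,b),
\end{align*}
where $P_3(a,b)$ denotes the third Pansu component, which tends to $0$ uniformly by hypothesis. Since $\g_1, \g_2$ are bounded on $K$ and each factor $(\g_i(b)-\g_i(a)) - (b-a)v_i(a)$ is $o(|b-a|)$ uniformly by the first-two-component hypothesis, the entire expression is $o(|b-a|)$ uniformly in $a,b \in K$. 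Thus $(\g,\phi)$ is a valid horizontal $C^1$ jet on $K$, and Theorem~\ref{t-HeisWhit} produces the desired horizontal $C^1$ extension $\Gamma$.
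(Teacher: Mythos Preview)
Your sufficiency argument has a genuine gap: you invoke Theorem~\ref{t-HeisWhit} but verify only conditions {\em (\ref{c-whitfield})} and {\em (\ref{c-leibniz})}, never condition {\em (\ref{c-av})}, the $A/V$ condition. This omission is not cosmetic --- the $A/V$ condition is precisely where the vanishing of the third Pansu component enters, and without it Theorem~\ref{t-HeisWhit} cannot be applied. In the $m=1$ case one computes (in the paper's conventions) that $A(\gamma;a,b)=h(b)-h(a)-2(f(b)g(a)-f(a)g(b))$ regardless of the chosen jet values $v_{1},v_{2}$, which is exactly the numerator of the third Pansu component, while $V(\gamma;a,b)\geq(b-a)^{2}$; hence the $A/V$ condition does follow from your hypothesis on $P_{3}$, but this computation must actually be carried out. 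The paper's own proof takes a different route --- extending $\gamma$ to a $C^{1}$ curve via Theorem~\ref{t-WhitFin}, verifying the discrete $A/V$ condition, converting via Lemma~\ref{l-AV2}, and then applying Lemma~\ref{l-horiz} before citing the $C^{1}$ extension theorem from \cite{ZimWhitney} --- but in either approach the $A/V$ verification is the essential step, not the Whitney remainder estimate for the third coordinate.

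A secondary issue: your declared ``standard convention'' (horizontality $\Gamma_{3}'=\tfrac{1}{2}(\Gamma_{1}\Gamma_{2}'-\Gamma_{2}\Gamma_{1}')$ and a factor $\tfrac{1}{2}$ in the third Pansu component) does not match the group law in Section~\ref{s-Heis}, where horizontality reads $h'=2(f'g-fg')$ and the $z$-coordinate of $\gamma(a)^{-1}*\gamma(b)$ is $h(b)-h(a)-2(f(b)g(a)-f(a)g(b))$. Since you cite Theorem~\ref{t-HeisWhit} as stated in this paper, your formula $\phi_{3}=\tfrac{1}{2}(\g_{1}v_{2}-\g_{2}v_{1})$ does not satisfy condition {\em (\ref{c-leibniz})} as written; in the paper's normalization it should be $\phi_{3}=2(v_{1}\g_{2}-v_{2}\g_{1})$. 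This is easy to fix, but combined with the missing $A/V$ check it means the proof as written does not go through.
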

See Definition~\ref{d-deriv} for a discussion of the Pansu difference quotients.
For higher order derivatives, we have the following.

\begin{theorem}
\label{t-supermain}
Assume $K \subseteq \mathbb{R}$ is compact with finitely many isolated points
and $\g:K \to \H$ is continuous.
Then $\g \in C_{\mathbb{H}}^m(K)$ if and only if
\begin{enumerate}
\item
the $m$th divided differences of $\gamma$ converge uniformly on $K$,
\item $\gamma$ satisfies the discrete $A/V$ condition on $K$.
\end{enumerate}
\end{theorem}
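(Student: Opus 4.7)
The plan is to establish the two directions of the equivalence separately, routing sufficiency through the jet-based Heisenberg Whitney extension theorem (Theorem~\ref{t-HeisWhit}). For necessity, suppose $\Gamma \in C^m(\R, \R^3)$ is horizontal with $\Gamma|_K = \g$. Uniform convergence of the $m$th divided differences on $K$ then follows componentwise from Theorem~\ref{t-WhitFin} applied to each coordinate of $\Gamma$ and restricting. The discrete A/V condition should be a direct consequence of Taylor-expanding the horizontality identity $\dot\Gamma_3 = 2(\Gamma_1 \dot\Gamma_2 - \Gamma_2 \dot\Gamma_1)$ around clusters of points of $K$: the signed area swept by the first two coordinates matches the increments of the third coordinate of $\g$ to the appropriate order in the spacing.

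For sufficiency, the strategy is to convert the discrete hypotheses into a horizontal Whitney jet on $K$ and then appeal to the existing extension machinery. First, apply Theorem~\ref{t-WhitFin} componentwise to the first two coordinates of $\g$ to obtain $C^m$ extensions $X, Y : \R \to \R$ together with Whitney jets $(x_j)_{j=0}^m$ and $(y_j)_{j=0}^m$ on $K$. Second, combine these jets with the convergent $m$th divided differences of the third coordinate of $\g$ to construct a candidate Whitney jet $(t_j)_{j=0}^m$ on $K$ whose zeroth entry agrees with the prescribed data and whose higher entries $t_1,\ldots,t_m$ are forced by the horizontal Leibniz-type identities obtained from iteratively differentiating $\dot t = 2(x\dot y - y\dot x)$. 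Third, feed this horizontal jet into Theorem~\ref{t-HeisWhit} to obtain the horizontal $C^m$ extension $\Gamma$ of $\g$.

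The main obstacle is step two: the candidate third-coordinate jet must simultaneously agree with the prescribed values on $K$ \emph{and} satisfy the quantitative jet-compatibility hypotheses demanded by Theorem~\ref{t-HeisWhit}. This is precisely the role the discrete A/V condition is designed to play, and the core technical work will be to show that the discrete hypothesis on finite configurations in $K$ translates cleanly into the analytic jet-compatibility statement on the closure. I expect this reduction to require careful manipulation of Pansu-type divided differences together with the discrete-to-continuous Taylor estimates already developed in the proof of the classical Theorem~\ref{t-WhitFin}. The hypothesis that $K$ has only finitely many isolated points handles the last wrinkle in sufficiency: the perfect part of $K$ is treated uniformly by Theorem~\ref{t-HeisWhit}, while each isolated point imposes only a single pointwise constraint on $\g$ that can be absorbed by choosing any local horizontal curve through that point and gluing via a standard cutoff argument.
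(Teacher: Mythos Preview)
Your overall architecture is correct and matches the paper's: both directions route through Theorem~\ref{t-HeisWhit}, and for sufficiency the key step is to manufacture a jet for the third coordinate whose higher entries are forced by the Leibniz relations while the zeroth entry is the prescribed data. The paper does this by first extending all three components via Theorem~\ref{t-WhitFin} to $\tilde\gamma=(f,g,h)\in C^m(\R,\R^3)$, then replacing $h$ by an $\hat h$ built from $\eta=2(f'g-fg')$ (Lemma~\ref{l-horiz}); your version, building the third-coordinate jet directly from $X,Y$, amounts to the same thing.

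There is, however, a genuine gap in how you locate the ``finitely many isolated points'' hypothesis. It is \emph{not} used to glue in local horizontal curves at isolated points via cutoffs. Theorem~\ref{t-HeisWhit} demands the \emph{continuous} $A/V$ condition (its condition~(\ref{c-av})), while you only have the \emph{discrete} $A/V$ condition. The passage from discrete to continuous (Lemma~\ref{l-AV2} in the paper) is where the hypothesis enters: given $a,b\in K$ with $b-a$ small, you must find $m-1$ additional points of $K$ within distance comparable to $b-a$ so that the interpolating polynomial $P(X;\,\cdot\,)$ approximates the Taylor polynomial $T_a(\,\cdot\,)$ well enough to transfer the discrete $A/V$ bound to the continuous one. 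This needs at least one of $a,b$ to be a limit point of $K$, which the hypothesis guarantees once $b-a$ is small. Your cutoff argument does not address this at all, because the $A/V$ condition is a statement about \emph{pairs} $a,b\in K$, not about single points.

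A smaller point: your necessity argument for the discrete $A/V$ condition (``Taylor-expand the horizontality identity'') is vaguer than it needs to be. The paper's route is cleaner: Theorem~\ref{t-HeisWhit} already records that horizontal $C^m$ curves satisfy the continuous $A/V$ condition, and then Lemma~\ref{l-AV} shows continuous $A/V$ implies discrete $A/V$ on any compact $K$ (no isolated-point hypothesis needed in that direction).
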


Condition {\em (1)} is clearly necessary due to Whitney's result (Theorem~\ref{t-WhitFin}).
The discrete $A/V$ condition in {\em (2)} is an analogue of 
the $A/V$ condition introduced in \cite{ZimSpePinWhitney},
and both are generalizations of the Pansu difference quotient. 
See Sections~\ref{s-AV} and \ref{s-dAV} for a thorough discussion of these conditions.
According to Proposition~5.2 in \cite{ZimWhitney},
the $A/V$ condition is
necessary when extending to smooth, horizontal curves in $\H$.
While the $A/V$ condition from \cite{ZimSpePinWhitney} relies on information from a collection of functions defined on $K$,
the discrete $A/V$ condition in {\em(2)} above requires knowledge only 
of the values of $\g$.
The definition of the discrete $A/V$ condition replaces Taylor polynomials with interpolating polynomials just as Whitney did in his classical proofs.

The following result holds for arbitrary compact sets $K \subseteq \R$.
For $\gamma=(f,g,h)$ with $f,g,h \in C^m(K)$,
we will write $W\g$ to denote the curve $(Wf,Wg,Wh) \in C^m(\R,\R^3)$
where $W$ is the linear operator whose existence is guaranteed by Theorem~\ref{t-WhitCheat}.
\begin{theorem}
\label{t-supermainWf}
Assume $K \subseteq \mathbb{R}$ is compact and $\g:K \to \H$ is continuous.
Then $\g \in C_{\mathbb{H}}^m(K)$ if and only if
\begin{enumerate}
\item
the $m$th divided differences of $\gamma$ converge uniformly on $K$,
\item $W\gamma$ satisfies the $A/V$ condition on $K$.
\end{enumerate}
\end{theorem}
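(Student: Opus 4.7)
The plan is to deduce Theorem~\ref{t-supermainWf} from the Whitney extension theorem for horizontal $C^m$ curves in $\H$ (Theorem~\ref{t-HeisWhit}, from \cite{ZimSpePinWhitney}), using the linear extension operator $W$ of Theorem~\ref{t-WhitCheat} to manufacture an $m$-jet on $K$ out of the data of $\g$ alone. By Theorem~\ref{t-WhitFin} applied componentwise, condition (1) is precisely equivalent to each component of $\g$ lying in $C^m(K)$, which is in turn the hypothesis needed for $W\g \in C^m(\R,\R^3)$ to be well-defined. Once $W\g$ exists, we obtain the Whitney-compatible jet
\[
\mathbf{F} := \bigl( W\g,\ (W\g)',\ \ldots,\ (W\g)^{(m)} \bigr)\big|_K
\]
on $K$ with $\mathbf{F}_0 = \g$, and condition (2) is precisely the $A/V$ condition for $\mathbf{F}$.

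For sufficiency, assume (1) and (2). The jet $\mathbf{F}$ satisfies both Whitney's classical compatibility conditions (since $W\g \in C^m(\R,\R^3)$) and the $A/V$ condition. Theorem~\ref{t-HeisWhit} then produces a horizontal $C^m$ curve $\Gamma : \R \to \R^3$ with $\Gamma^{(k)}|_K = \mathbf{F}_k$ for all $0 \le k \le m$, and in particular $\Gamma|_K = \g$, so that $\g \in C^m_{\mathbb{H}}(K)$.

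The necessity direction is where the main obstacle lies. Given a horizontal $C^m$ extension $\Gamma$ of $\g$, condition (1) is immediate from Theorem~\ref{t-WhitFin} applied componentwise, and the necessity half of Theorem~\ref{t-HeisWhit} forces the jet of $\Gamma$ on $K$ to satisfy the $A/V$ condition. What must still be shown is that the jet $\mathbf{F}$ of $W\g$, rather than the jet of $\Gamma$, satisfies $A/V$; these two jets can genuinely differ, since $W\g$ and $\Gamma$ are two distinct $C^m$ extensions of the same boundary data. The key observation is that at each accumulation point $x$ of $K$, both $(W\g)^{(k)}(x)$ and $\Gamma^{(k)}(x)$ are forced to equal the same limit of divided differences of $\g$, so the two jets agree on the derived set $K'$; meanwhile the $A/V$ condition, being a statement about tuples of $K$-points that coalesce, is vacuous at isolated points and hence insensitive to jet values there. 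Verifying this invariance rigorously against the precise formulation of $A/V$ from Section~\ref{s-AV} is the principal technical step of the argument.
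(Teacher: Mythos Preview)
Your sufficiency argument has a genuine gap. Theorem~\ref{t-HeisWhit} has \emph{three} hypotheses, not two: in addition to the Whitney-field condition and the $A/V$ condition, the jet must satisfy the Leibniz-type identities
\[
H^k \;=\; 2\sum_{i=0}^{k-1}\binom{k-1}{i}\bigl(F^{k-i}G^{i}-G^{k-i}F^{i}\bigr)\quad\text{on }K,\qquad 1\le k\le m.
\]
For the jet of $W\gamma$ this would in particular require $(Wh)' = 2\bigl((Wf)'(Wg)-(Wf)(Wg)'\bigr)$ on $K$, and there is no reason for that to hold: the operator $W$ acts componentwise and knows nothing about the horizontality constraint. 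The paper resolves this via Lemma~\ref{l-horiz} (called there ``possibly the most useful observation from this paper''): once $W\gamma$ satisfies the $A/V$ condition on $K$, one may replace the third component $h=Wh$ by some $\hat h\in C^m(\R)$ agreeing with $h$ on $K$ and satisfying $\hat h'=2(f'g-fg')$ on $K$; the Leibniz rule then forces all the higher identities, and since $A$ and $V$ depend on $h$ only through its values on $K$, the $A/V$ condition transfers. It is the jet of $\hat\gamma=(Wf,Wg,\hat h)$, not of $W\gamma$, to which Theorem~\ref{t-HeisWhit} is applied. Your proposal omits this step entirely.

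On necessity you raise a subtlety that the paper's one-line argument glosses over, and your observation that the jets of $W\gamma$ and $\Gamma$ coincide at every accumulation point of $K$ is correct (iterated Rolle). But the assertion that the $A/V$ condition is ``vacuous at isolated points'' is too quick when $K$ has infinitely many isolated points: one can have pairs $a_n<b_n$ in $K$ with each $a_n$ isolated yet $b_n-a_n\to 0$, and at such $a_n$ the two jets may genuinely differ. The argument can be completed---such $a_n$ must accumulate at some $c\in K'$, where the jets agree, and continuity of the derivatives controls the discrepancy---but this needs to be carried out rather than asserted.
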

The advantage of this result over Theorem~\ref{t-supermain} is clearly in its generality for all compact sets.
However, one might expect that the hypotheses of Theorem~\ref{t-supermainWf}
are harder to ``compute'' (in the sense of \cite{FittingI,FittingII,FittingIII})
than those of Theorem~\ref{t-supermain}.
This is summarized in the following (imprecise) question:
\begin{question}
Suppose $K \subseteq \R$ is compact,
$\g:K \to \H$ is continuous,
and the $m$th divided differences of $\g$ converge uniformly on $K$. 
Which is easier: verifying that $\g$ satisfies the discrete $A/V$ condition on $K$
or computing $W\g$ and verifying that it satisfies the $A/V$ condition on $K$?
\end{question}

Finally, we come to our discussion of the finiteness principle (Theorem~\ref{t-brush})
for curves in the Heisenberg group.
Just as in the Euclidean setting, 
the statement of the following result removes all mention of divided differences.

\begin{theorem}
\label{t-finiteness}
Assume $K \subseteq \mathbb{R}$ is compact
with finitely many isolated points
and $\# K \geq m+2$
for some positive integer $m$,
and suppose $\g:K \to \H$ is continuous.
If
there exist a 
modulus of continuity $\omega$ and a
constant $M > 0$ such that,
for any $X \subseteq K$ with $\#X = m+2$,
there is a curve $\Gamma_X \in C^{m,\omega}(\R,\R^3)$
with $\Gamma_X = \gamma$ on $X$,
$
\Vert \Gamma_X \Vert_{C^{m,\omega}(\R,\R^3)}  \leq M,
$
and
$$
\left| \frac{A(\Gamma_X;a,b)}{V(\Gamma_X;a,b)} \right| \leq M\omega(b-a)
\quad \text{for all } a,b \in K \text{ with } a<b,
$$
then there is a horizontal curve $\Gamma \in C^{m,\sqrt{\omega}}(\R,\R^3)$
such that $\Gamma|_K = \g$.
\end{theorem}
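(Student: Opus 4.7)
The plan is to reduce Theorem~\ref{t-finiteness} to (a modulus-of-continuity refinement of) Theorem~\ref{t-supermain} in three steps, using the pointwise $A/V$ control on $(m{+}2)$-subsets to produce both a Euclidean $C^{m,\omega}$ extension of $\gamma$ and a discrete $A/V$ bound on $K$. The square-root loss in the conclusion will arise only in the final step, when the $t$-coordinate is adjusted to achieve horizontality.

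First I would write $\gamma = (f,g,h)$ and apply the Brudnyi--Shvartsman finiteness principle (Theorem~\ref{t-brush}) coordinate-wise. For each $X \subseteq K$ with $\#X = m+2$, the three components of $\Gamma_X$ provide $C^{m,\omega}(\R)$ extensions of $f|_X$, $g|_X$, $h|_X$ with norms controlled by $\sup_X \Vert\Gamma_X\Vert$. Theorem~\ref{t-brush} then produces a Euclidean extension $\tilde\Gamma \in C^{m,\omega}(\R,\R^3)$ of $\gamma$; this $\tilde\Gamma$ need not be horizontal.

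Next I would transfer the hypothesized bound $|A(\Gamma_X;a,b)/V(\Gamma_X;a,b)| \leq M\omega(b-a)$ into a quantitative discrete $A/V$ bound on $\gamma|_K$ in the sense of Theorem~\ref{t-supermain}. For a pair $a<b$ in $K$ I would choose $X \subseteq K$ to contain $a$, $b$, and $m$ further points of $K$ clustered near $[a,b]$. Because $A$ and $V$ are built from interpolating data (Sections~\ref{s-AV} and \ref{s-dAV}) and because both $\Gamma_X$ and $\gamma$ agree on $X$, the quantities $A(\Gamma_X;a,b)$ and $V(\Gamma_X;a,b)$ should be comparable to their discrete analogues $A_{\mathrm{disc}}(\gamma;a,b)$ and $V_{\mathrm{disc}}(\gamma;a,b)$ built from the values of $\gamma$ on $X$, with error controlled by the uniform $C^{m,\omega}$ bound on $\Gamma_X$. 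This should yield $|A_{\mathrm{disc}}(\gamma;a,b)/V_{\mathrm{disc}}(\gamma;a,b)| \lesssim \omega(b-a)$ uniformly in $a<b \in K$. Divided differences of $\gamma$ also converge with modulus $\omega$ since $\tilde\Gamma \in C^{m,\omega}$, so the hypotheses of Theorem~\ref{t-supermain} hold with quantitative control of modulus $\omega$.

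Finally I would rerun the Whitney-type construction underlying Theorem~\ref{t-supermain}, tracking moduli throughout. Use $\tilde\Gamma$ from Step~1 as the initial $xy$-projection, and correct the $t$-coordinate on each component of $\R \setminus K$ so that the horizontality equation $\dot t = 2(x\dot y - y\dot x)$ is satisfied and the corrected curve still agrees with $\gamma$ on $K$. On a Whitney interval of length $\ell$, the prescribed Heisenberg area defect is of order $\omega(\ell)\cdot\ell$ by Step~2; because Heisenberg area depends quadratically on the size of the $xy$-correction, realizing this defect forces the correction to have $C^{m,\eta}$ modulus with $\eta \sim \sqrt{\omega}$. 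This is the structural source of the $\sqrt{\omega}$ loss in the conclusion.

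The hard part is Step~3: producing $t$-coordinate corrections on every Whitney interval of $\R\setminus K$ with a uniform $C^{m,\sqrt{\omega}}$ estimate and gluing them smoothly at points of $K$. The assumption that $K$ has finitely many isolated points, inherited from Theorem~\ref{t-supermain}, is used to keep $V_{\mathrm{disc}}$ bounded away from zero on configurations where it would otherwise cause the $A/V$ ratio to degenerate. Step~2 also requires care, since comparing $A(\Gamma_X;a,b)$ to its discrete analogue requires a stability estimate in terms of $\sup_X \Vert\Gamma_X\Vert$ and the data $\gamma|_X$ alone. Once these points are carried out, the remainder of the argument is the routine bookkeeping of the Whitney construction.
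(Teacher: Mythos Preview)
Your three-step outline matches the paper's proof: Brudnyi--Shvartsman gives a Euclidean $C^{m,\omega}$ extension $\tilde\gamma$; the $A/V$ bound on each $\Gamma_X$ is transferred to a discrete $A/V$ bound on $\gamma$ via a comparison lemma (the paper's Lemma~\ref{l-AVLip}); and then the Heisenberg Whitney machinery produces the horizontal $C^{m,\sqrt\omega}$ extension. Two small points of divergence. First, rather than ``rerunning'' the construction in Step~3, the paper packages it cleanly: Lemma~\ref{l-horizLip} replaces $h$ by some $\hat h\in C^{m,\omega}$ so that the jet of $\hat\gamma=(f,g,\hat h)$ satisfies the Leibniz identities on $K$, then Lemma~\ref{l-AVLip2} converts the discrete $A/V$ bound back to a continuous one for $\hat\gamma$, and Theorem~\ref{t-HeisWhitLip} finishes. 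Second, your explanation of the finitely-many-isolated-points hypothesis is off: $V_{\mathrm{disc}}$ is always at least $\diam(X)^{2m}$ and cannot degenerate. The hypothesis is used (in Lemma~\ref{l-AVLip2}) only to guarantee that for any pair $a<b$ in $K$ one can find $m-1$ additional points of $K$ within distance $(b-a)/2$ of $a$ or $b$, so that the auxiliary $(m{+}1)$-point set $Y$ has $\diam(Y)\le 2(b-a)$.
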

Note the drop in regularity of the $m$th derivative.
This is a result of the construction of the horizontal $C^m$ extension theorem in \cite{ZimSpePinWhitney}.
The following proposition hints that this drop is due to the construction itself and may be possible to remedy.
\begin{proposition}
\label{p-finiteness}
If $\g \in C^{m,\omega}(\R,\R^3)$ and $\g$ is horizontal, then
there is a constant $M > 0$ such that
$$
\left| \frac{A(\g;a,b)}{V(\g;a,b)} \right| \leq M\omega(b-a)
\quad \text{for all } a,b \in \R \text{ with } a<b.
$$
\end{proposition}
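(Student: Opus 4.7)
The plan is to expand $\gamma$ in Taylor polynomials on $[a,b]$, exploit the horizontality identity $h' = \tfrac{1}{2}(fg' - gf')$ for $\gamma = (f,g,h)$, and use the modulus-of-continuity control on $\gamma^{(m)}$ coming from the $C^{m,\omega}$-norm $\Vert \gamma \Vert$.

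First, I would reduce to the case of small $b-a$. For $b - a \geq 1$, the quantity $|A(\gamma;a,b)/V(\gamma;a,b)|$ is controlled directly by a combination of $\Vert \gamma \Vert$ and a bounded power of $b-a$ arising from the definitions in Section~\ref{s-AV}; since $\omega(b-a) \geq \omega(1) > 0$ on this range, a sufficiently large $M$ handles this case at once. For the remaining case $b-a < 1$, write $\gamma = P + R$, where $P$ is the Taylor polynomial of $\gamma$ of order $m$ centered at $c = (a+b)/2$ and $R = \gamma - P$. The $C^{m,\omega}$-regularity yields $|R^{(k)}(t)| \leq C \Vert \gamma \Vert \, \omega(b-a) (b-a)^{m-k}$ for $t \in [a,b]$ and $0 \leq k \leq m$.

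Substituting this decomposition into the definition of $A(\gamma;a,b)$ and using horizontality of $\gamma$ to replace $h(b) - h(a)$ by $\tfrac{1}{2}\int_a^b(fg' - gf')\,dt$, the purely polynomial contribution to $A$ can be computed exactly and combined with the polynomial portion of this integral, producing a cancellation to leading order. The mixed $P$-$R$ and purely quadratic $R$-$R$ terms that remain are each bounded by a constant times $\Vert \gamma \Vert \, \omega(b-a)$ times a power of $b-a$ matching the scaling of $V(\gamma;a,b)$, so that division by $V$ gives the desired bound.

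The main obstacle is verifying that the polynomial part of $A(\gamma;a,b)$ cancels precisely against the corresponding polynomial portion of the horizontality integral, to sufficient order that division by $V(\gamma;a,b)$ yields exactly the modulus $\omega(b-a)$ rather than some smaller fractional power. This cancellation is the key algebraic feature of the $A/V$-condition introduced in \cite{ZimSpePinWhitney}, and confirming it is what makes the bound $M\omega(b-a)$ genuine rather than the degraded $\sqrt{\omega}$ estimate that appears in Theorem~\ref{t-finiteness}.
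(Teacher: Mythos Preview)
Your overall strategy---Taylor expand, use horizontality to rewrite $h(b)-h(a)$, and bound the remainders against $V$---is precisely what the paper points to (Proposition~5.1 in \cite{ZimSpePinWhitney}, with every $\varepsilon$ replaced by a constant multiple of $\omega(b-a)$). A few details need adjustment.

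First, the horizontality relation in this paper's convention is $h'=2(f'g-fg')$, not $h'=\tfrac12(fg'-gf')$; see Subsection~\ref{s-Heis}.

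Second, expand about $a$ rather than the midpoint. The quantity $A(\gamma;a,b)$ is built from $T_af$ and $T_ag$, so after normalizing via Lemma~\ref{l-AVleftinv} so that $\gamma(a)=0$, horizontality gives immediately
\[
A(\gamma;a,b)=2\int_a^b\Bigl[(f'g-g'f)-\bigl((T_af)'T_ag-(T_ag)'T_af\bigr)\Bigr],
\]
and writing $f=T_af+R_f$, $g=T_ag+R_g$ cancels the polynomial part exactly, leaving only terms linear or quadratic in the remainders. Each such term is dominated by $C\omega(b-a)\,V(\gamma;a,b)$ via \eqref{e-Taylor-int} together with the polynomial Poincar\'e inequality (Corollary~2.11 in \cite{ZimSpePinWhitney}) applied to $T_af$ and $T_ag$. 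Centering at $c=(a+b)/2$ instead forces an extra comparison of $T_cf$ with $T_af$ before you can touch $A$, which is entirely avoidable.

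Third, your split into $b-a\geq 1$ and $b-a<1$ is unnecessary and, as written, not justified: the seminorm $\Vert\gamma\Vert_{C^{m,\omega}}$ controls only the modulus of continuity of $\gamma^{(m)}$, not the size of $\gamma$ or of its lower-order derivatives, so there is no a~priori bound on $|A/V|$ for large $b-a$ coming from $\Vert\gamma\Vert$ alone. The argument sketched above runs uniformly in $b-a$ once you center at $a$ and use left invariance.
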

The proof of this proposition is nearly identical to that of Proposition~5.1 in \cite{ZimSpePinWhitney}, so it will not be included below. Simply replace all instances of $\varepsilon$ with a constant multiple of $\omega(b-a)$ throughout the proof.

The paper is organized as follows.
Section~\ref{s-prelim} establishes preliminary facts about the sub-Riemannian Heisenberg group,
prior Whitney-type results, and divided differences
which will be important for the later discussion.
The bulk of the new content is contained in Sections~\ref{s-AV} and \ref{s-dAV}
wherein the $A/V$ conditions are defined and several important lemmas relating the $A/V$ condition from \cite{ZimSpePinWhitney} to the discrete $A/V$ condition are established.
Using the technical tools provided in these sections,
we then prove Theorems~\ref{c-m1}, \ref{t-supermain}, \ref{t-supermainWf}, and \ref{t-finiteness} in Section~\ref{s-proofs}.

I would like to sincerely thank the referee for their careful reading of the manuscript and for their suggestions and corrections which led to an improvement of this paper.

\section{Preliminaries}
\label{s-prelim}
Throughout the rest of the paper, $m$ will represent a positive integer, and $\omega$ will be a modulus of continuity i.e. a continuous, increasing, concave function $\omega:[0,\infty] \to [0,\infty]$ with $\omega(0)=0$.
In what follows, given an object $d$ we will write 
$a \lesssim_d b$ to indicate that $a\leq Cb$ where $C=C(d)>0$ is a constant depending possibly on $d$.
Moreover, for any integer $k>0$, we say that a nonnegative quantity $c(a,b)$ is uniformly $o(|b-a|^k)$ on a set $K$
if, for every $\epsilon > 0$, there is a $\delta>0$ such that $c(a,b) \leq \varepsilon |b-a|^k$ whenever $a,b \in K$ satisfy $|b-a| < \delta$.

\subsection{The Heisenberg group}
\label{s-Heis}
For any positive integer $n$,
we define the {\em nth sub-Riemannian Heisenberg group}
to be
$\mathbb{H}^n = \mathbb{R}^{2n+1}$
with the group law
\begin{align*}
(x,y,z)*(x',y',z')
=
\left(x+x',y+y',z+z'+2\sum_{j=1}^n(y_jx_j'-x_jy_j')\right)
\end{align*}
for $x,y,x',y' \in \R^n$
and $z,z' \in \R$.
One may check that $(x,y,z)^{-1} = (-x,-y,-z)$.
With this group law, $\H^n$ is a Lie group with left invariant vector fields
$$
X_i(p) = \tfrac{\partial}{\partial x_i} + 2y_i \tfrac{\partial}{\partial z}, \quad
Y_i(p) = \tfrac{\partial}{\partial y_i} - 2x_i \tfrac{\partial}{\partial z}, \quad
Z(p) = \tfrac{\partial}{\partial z}
\qquad
\text{for } 1 \leq i \leq n
$$
for $p = (x,y,z) \in \R^n \times \R^n \times \R$.
Since $[X_i,Y_i] = -4Z$ for each $1 \leq i \leq n$,
the Lie group $\H^n$ is a Carnot group of step 2
with horizontal distribution $\text{span} \{ X_1,Y_1,\dots,X_n,Y_n\}$. 
We say that a point in $\H^n$ is {\em horizontal} if it lies in $\mathbb{R}^{2n} \times \{0 \}$,
and an absolutely continuous curve $\gamma:\R \to \R^{2n+1}$
is {\em horizontal} if $\gamma'(t) \in \text{span} \{ X_1,Y_1,\dots,X_n,Y_n\}$
for almost every $t \in \R$.
We may equivalently write the following.
\begin{proposition}
Suppose $\gamma=(f,g,h):\R \to \R^n \times \R^n \times \R$ is absolutely continuous.
Then
$\g$ is horizontal if and only if
$$
h' = 2 \sum_{j=1}^n \left(f'_jg_j - f_jg'_j \right)
\qquad
\text{a.e. in } \R.
$$
\end{proposition}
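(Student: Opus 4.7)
The plan is to proceed by directly unpacking the definition of horizontality and comparing components. Since $\gamma$ is absolutely continuous, $\gamma'(t)$ exists for almost every $t$, and at such points the definition of horizontality says that $\gamma'(t) \in \mathrm{span}\{X_1(\gamma(t)), Y_1(\gamma(t)), \dots, X_n(\gamma(t)), Y_n(\gamma(t))\}$. So I would fix such a $t$ and write
$$
\gamma'(t) = \sum_{j=1}^n \bigl(a_j(t) X_j(\gamma(t)) + b_j(t) Y_j(\gamma(t))\bigr)
$$
for some scalars $a_j(t), b_j(t)$, provided $\gamma'(t)$ is horizontal.

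Next, I would substitute the explicit formulas
$X_j = \partial_{x_j} + 2y_j\partial_z$ and $Y_j = \partial_{y_j} - 2x_j\partial_z$
evaluated at $\gamma(t)=(f(t),g(t),h(t))$ and read off components. The first $2n$ coordinates give $a_j(t) = f_j'(t)$ and $b_j(t) = g_j'(t)$, while the final coordinate yields
$$
h'(t) \;=\; 2\sum_{j=1}^n \bigl(a_j(t) g_j(t) - b_j(t) f_j(t)\bigr) \;=\; 2\sum_{j=1}^n \bigl(f_j'(t) g_j(t) - f_j(t) g_j'(t)\bigr).
$$
Conversely, if this identity holds a.e., then setting $a_j(t):=f_j'(t)$ and $b_j(t):=g_j'(t)$ shows directly that $\gamma'(t)=\sum_j (a_j X_j + b_j Y_j)$ at $\gamma(t)$, so $\gamma'(t)$ is horizontal at almost every $t$.

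There is no real obstacle here: the statement is essentially a restatement of the definition once one writes the horizontal distribution in coordinates, and the only subtlety is tracking the a.e.\ qualifier, which is harmless because absolute continuity guarantees $\gamma'$ exists a.e.\ and both conditions are required only a.e.
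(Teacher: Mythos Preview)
Your argument is correct: this proposition is indeed just a coordinate-by-coordinate reading of the definition of horizontality, and the computation you outline is the standard one. The paper does not actually supply its own proof here but simply cites Lemma~2.3 in \cite{GarethLusin}, so there is no in-paper argument to compare against; your direct verification is exactly what one expects.
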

For a proof of this, see Lemma~2.3 in \cite{GarethLusin}.
In  $\H = \H^1$,
this equation is simply $h' = 2(f'g-fg')$.
If $\g \in C^m_\H (\R)$
(i.e. $\gamma \in C^m(\R,\R^3)$ and $\g$ is a horizontal curve),
then, according to the Leibniz rule, we have
\begin{equation}
    \label{e-LeibnizRule}
h^{(k)} = 2 \sum_{i=0}^{k-1} \binom{k-1}{i} \left(f^{(k-i)}g^{(i)} - g^{(k-i)}f^{(i)} \right)
\qquad \text{for } 1 \leq k \leq m \text{ on } \R.
\end{equation}

The {\em dilations} $(\delta_r)_{r\in \R \setminus \{ 0 \}}$ defined as
$$
\delta_r(x,y,z) = (rx,ry,r^2z)
$$
form a 
family of group automorphisms on $\H^n$.
Recall that the {\em Pansu derivative} of $\g : \R \to \H$ at $x \in \R$
is defined as
$$
 \lim_{y \to x} \delta_{1/(y-x)} \left( \g(x)^{-1} * \g(y) \right)
$$
whenever this limit exists.
If $\gamma$ is Lipschitz, then this limit exists almost everywhere 
and converges to a horizontal point.
See, for example, Lemma~2.1.4 in \cite{MontiThesis}.
\begin{definition}
\label{d-deriv}
Suppose $K \subseteq \R$ is compact and fix $\g=(f,g,h):K \to \H$.
We say that the {\em Pansu difference quotients of $\g$ converge uniformly on $K$ to horizontal points} if
the difference quotients of $f$ and $g$ converge uniformly on $K$ (in the sense of divided differences (see Subsection~\ref{s-DD}))
and if there is a modulus of continuity $\alpha$ such that
$$
|h(b) - h(a) - 2 (f(b)g(a) - f(a)g(b))| \leq \alpha(|b-a|) |b-a|^2
$$
for all $a,b \in K$.
\end{definition}
According to the definition of the group law, this is equivalent to the assumption that the difference quotients formed by the first two coordinates of $\delta_{1/(b-a)} \left(\gamma(a)^{-1} * \gamma(b) \right)$ converge uniformly and that the third coordinate vanishes uniformly.
Indeed, the  third coordinate of $\gamma(a)^{-1} * \gamma(b)$ is
$
h(b) - h(a) - 2 (f(b)g(a) - f(a)g(b))
$.
Compare this definition to the statement of Theorem~1.7 in \cite{ZimWhitney}.

\subsection{Function spaces}
\label{sec-function}

Suppose $\mathcal{C}(\mathbb{R}^n)$ is a (semi)normed space of real valued functions on $\mathbb{R}^n$ with (semi)norm $\Vert \cdot \Vert_{\mathcal{C}(\mathbb{R}^n)}$.
For any measurable set $E \subset \mathbb{R}^n$, 
the {\em trace space} $\mathcal{C}(E)$ is defined as
$
\mathcal{C}(E) = \{ F|_E \, : \, F \in \mathcal{C}(\mathbb{R}^n) \}
$
with seminorm
$$
\Vert f \Vert_{\mathcal{C}(E)} = \inf \left\{  \Vert F \Vert_{\mathcal{C}(\R^n)} \, : \, F \in \mathcal{C}(\R^n), \, F|_E = f \right\}.
$$
We also define $\mathcal{C}(\mathbb{R},\mathbb{R}^3)$ to be the space of functions $\gamma=(f,g,h):\mathbb{R} \to \R^3$
such that $f,g,h \in \mathcal{C}(\R)$, and we endow this space with the seminorm
$$
\Vert \gamma \Vert_{\mathcal{C}(\mathbb{R},\mathbb{R}^3)} = \Vert f \Vert_{\mathcal{C}(\mathbb{R})} + \Vert g \Vert_{\mathcal{C}(\mathbb{R})} + \Vert h \Vert_{\mathcal{C}(\mathbb{R})}.
$$

For a positive integer $m$, the space 
$C^m(\R^n)$
consists of those $m$-times continuously differentiable functions
$f:\R^n \to \R$ such that the following seminorm is finite:
$$
\Vert f \Vert_{C^m(\R^n)} = \sup_{x \in \R^n} \sum_{|\alpha|=m} |\partial^\alpha f(x)|.
$$

If 
$\omega:[0,\infty) \to [0,\infty)$ is a modulus of continuity (i.e. an increasing, concave, continuous function with $\omega(0)=0$),
define $C^{m,\omega}(\R^n)$ to be the subspace of $C^{m}(\mathbb{R}^n)$
consisting of those functions such that the following seminorm is finite:
$$
\Vert f \Vert_{C^{m,\omega}(\R^n)} =  \sup_{\substack{x,y \in \R^n \\ x \neq y}} 
\sum_{|\alpha|=m}
\frac{\left| \partial^\alpha f(x) - \partial^\alpha f(y) \right|}{\omega(|x-y|)}.
$$
Of course, when $n=1$, we have
$$
\Vert f \Vert_{C^m(\R)} = \sup_{x \in \R} |f^{(m)}(x)|
\qquad
\text{and}
\qquad
\Vert f \Vert_{C^{m,\omega}(\R)} = \sup_{\substack{x,y \in \R \\ x \neq y}} 
\frac{\left| f^{(m)}(x) - f^{(m)}(y) \right|}{\omega(|x-y|)}
$$
for any $f:\R \to \R$.
In other words, $f \in C^{m,\omega}(\R)$
means $f^{(m)}$ is uniformly continuous with modulus of continuity $\Vert f \Vert_{C^{m,\omega}(\R)} \cdot \omega$.


\subsection{Prior Whitney-type results}
\label{s-WhitHist}
Given a collection $F=(F^k)_{k=0}^m$ of continuous, real valued functions on a set $E \subset \R$,
define the $m$th order Taylor polynomial $T_a^m F$ of $F$ at $a \in E$ as
\begin{equation}
\label{e-TaylorJet}
T_a^m F(x) 
= 
\sum_{k=0}^m \frac{F^{k}(a)}{k!}(x-a)^k
\quad
\text{for all } x \in \R.
\end{equation}
If $f:\R \to \R$ is $m$ times differentiable at $a$,
the Taylor polynomial $T_a^m f$
is defined as usual using the collection $F=(f^{(k)})_{k=0}^m$ in \eqref{e-TaylorJet}.
We will often drop the exponent and write $T_a F$ or $T_a f$ when the order of the polynomial is obvious from the context.

If $f \in C^m(\R)$
and $K \subseteq \R$ is compact,
then, 
according to Taylor's theorem,
\begin{equation}
    \label{e-taylorApprox}
    \lim_{\substack{|b-a| \to 0 \\ a,b \in K}}
    \frac{\left| f^{(k)}(b) - T_a^{m-k}f^{(k)}(b) \right|}{|b-a|^{m-k}} = 0
    \qquad
    \text{for } 0 \leq k \leq m.
\end{equation}
We note in particular that there is a modulus of continuity $\alpha$ with
\begin{equation}
\label{e-taylor0}
|f^{(m)}(x)-f^{(m)}(y)| \leq \alpha(|x-y|),
\end{equation}
\begin{equation}
\label{e-taylor1}
|f(y)-T_x^mf(y)| \leq \alpha(|x-y|) |x-y|^m,
\end{equation}
\begin{equation}
\label{e-taylor2}
|f'(y)-(T_x^mf)'(y)| \leq \alpha(|x-y|) |x-y|^{m-1}
\end{equation}
for all $x,y \in K$
since $(T_x^mf)' = T_x^{m-1}(f')$.

We call a collection $F=(F^k)_{k=0}^m$ of continuous, real valued functions on a closed set $K \subset \R$
a {\em Whitney field of class $C^m$ on $K$}
if
$$
\lim_{\substack{|b-a| \to 0 \\ a,b \in K}} 
\frac{\left|F^k(b) - T_a^{m-k} F^k(b) \right|}{|b-a|^{m-k}} = 0
    \qquad
    \text{for } 0 \leq k \leq m.
$$
where $T_a^{m-k} F^k$ is the $(m-k)$th order Taylor polynomial of the collection $(F^j)_{j=k}^{m}$.
Note that, 
if $f \in C^m(\R)$, then the collection $F= (f^{(k)})_{k=0}^m$ is a Whitney field of class $C^m$ on $K$ for any compact $K \subseteq \R$.
From Whitney's classical extension theorem in dimension 1, we have the following
\begin{theorem}[\cite{Whitney}]
\label{t-WhitClassic}
Suppose $F=(F^k)_{k=0}^m$ is a collection of continuous, real valued functions on a compact set $K \subset \R$.
There is a function
$f \in C^m(\R)$ satisfying $f^{(k)}|_K = F^k$ for $0 \leq k \leq m$
if and only if
$F$ is a Whitney field of class $C^m$ on $K$.
\end{theorem}

See \cite{Bierstone} for a proof.
Suppose now that $f \in C^{m,\omega}(\R)$.
According to (2) in \cite{FollandRemainder},
there is a constant $C>0$ such that 
\begin{align}
\label{e-Taylor-int}
\frac{|f^{(k)}(b) - T_a^{m-k}f^{(k)}(b)|}{|b-a|^{m-k}}
\leq C \omega(|b-a|)
\end{align}
for any $a,b \in \R$ and $0 \leq k \leq m$.
For a closed set $K \subseteq \R$,
a collection $F=(F^k)_{k=0}^m$ of continuous, real valued functions on $K$ is a {\em Whitney field of class $C^{m,\omega}$ on $K$}
if there is a constant $C >0$ such that
$$
\frac{\left|F^k(b) - T_a^{m-k} F^k(b) \right|}{|b-a|^{m-k}} \leq C \omega(|b-a|)
    \qquad
    \text{for } a,b \in K, \, 0 \leq k \leq m.
$$

A proof similar to that of Theorem~\ref{t-WhitClassic}
then implies the following result.
This theorem will be useful when proving the finiteness principle (Theorem~\ref{t-finiteness}).
\begin{theorem}[\cite{Whitney}]
\label{t-WhitClassicLip}
Suppose $F=(F^k)_{k=0}^m$ is a collection of continuous, real valued functions on a compact set $K \subset \R$.
There is a function
$f \in C^{m,\omega}(\R)$ satisfying $f^{(k)}|_K = F^k$ for $0 \leq k \leq m$
if and only if
$F$ is a Whitney field of class $C^{m,\omega}$ on $K$.
\end{theorem}

The following version of Theorem~\ref{t-WhitClassic}
was proven for $C^1$ horizontal curves in the Heisenberg group in \cite{ZimWhitney}
and for $C^m$ horizontal curves in \cite{ZimSpePinWhitney}.

\begin{theorem}[\cite{ZimSpePinWhitney,ZimWhitney}]
\label{t-HeisWhit}
Suppose $K \subseteq \R$ is compact and 
$F=(F^k)_{k=0}^m$, $G=(G^k)_{k=0}^m$, and $H=(H^k)_{k=0}^m$ are collections of continuous, real valued functions on $K$.
There is a curve $\Gamma \in C^m_\H(\R)$  
satisfying $\Gamma^{(k)}|_K = (F^k,G^k,H^k)$
for $0 \leq k \leq m$
if and only if 
\begin{enumerate}
    \item 
    \label{c-whitfield}
    $F$, $G$, and $H$ are Whitney fields of class $C^m$ on $K$,
    \item \label{c-leibniz} for every $1 \leq k \leq m$ and $x \in K$, the following holds on $K$:
    $$
        H^k = 2 \sum_{i=0}^{k-1} \binom{k-1}{i} \left(F^{k-i}G^i- G^{k-i}F^i \right),
    $$
    \item \label{c-av} $(F^0,G^0,H^0)$ satisfies the $A/V$ condition on $K$.
\end{enumerate}
\end{theorem}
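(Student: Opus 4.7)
My plan is to prove each direction separately. Necessity is immediate: if $\Gamma = (f,g,h) \in C_\H^m(\R)$ extends the jet data, then condition (1) holds componentwise by Taylor's theorem applied to each of $f, g, h$, condition (2) is exactly the horizontality identity \eqref{e-LeibnizRule} restricted to $K$, and condition (3) is the content of Proposition~5.1 of \cite{ZimSpePinWhitney}. The non-trivial direction is sufficiency, and here I would mimic the classical Whitney construction of \cite{Whitney} while correcting for horizontality on each complementary interval of $K$.

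For sufficiency, I would proceed in three stages. First, I would apply Theorem~\ref{t-WhitClassic} to the jets $F$ and $G$ separately to produce preliminary extensions $\tilde f, \tilde g \in C^m(\R)$ whose derivatives on $K$ agree with $F$ and $G$. Second, on each bounded complementary interval $(a_i, b_i)$ of $K$, I would modify $\tilde f$ and $\tilde g$ by adding smooth bump corrections $\phi_i, \psi_i$ supported in $(a_i, b_i)$ and vanishing to order $m$ at both endpoints; set $f = \tilde f + \sum_i \phi_i$ and $g = \tilde g + \sum_i \psi_i$. Third, fix a base point $a_* \in K$ and define the vertical component by
\[
h(t) = H^0(a_*) + 2 \int_{a_*}^t (f' g - f g')\, ds.
\]
By construction $(f,g,h)$ is absolutely continuous and horizontal, and the remaining question is whether $h \in C^m(\R)$ with $h^{(k)}|_K = H^k$.

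The crucial obstacle is designing the corrections $\phi_i, \psi_i$ so that three conditions are met simultaneously: (a) $h(b_i) - h(a_i) = H^0(b_i) - H^0(a_i)$ for every $i$, which forces $h|_K = H^0$; (b) the $C^m$-seminorms of $\phi_i, \psi_i$ decay fast enough as $|b_i - a_i| \to 0$ that $f, g \in C^m(\R)$; and (c) the derivatives of $h$ at every accumulation point of $K$ equal the prescribed $H^k$. Condition (a) reduces to requiring that the corrections contribute a prescribed signed area equal to the deficit
\[
H^0(b_i) - H^0(a_i) - 2\int_{a_i}^{b_i}(\tilde f' \tilde g - \tilde f \tilde g')\, ds,
\]
and the $A/V$ hypothesis (3) is precisely the quantitative assertion that this deficit is small enough, relative to the natural horizontal area $V$ of the jet data, to be realized by bumps satisfying (b). Condition (c) is automatic at isolated points of $K$, and at accumulation points of $K$ it follows from Whitney's field condition (1) on $H$ combined with the pointwise Leibniz identity (2), since these together force the integral representation for $h$ to match $H^k$ to order $m$ as one approaches $K$.

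The main technical difficulty is making $\phi_i, \psi_i$ sufficiently explicit that their $C^m$-seminorms are controlled by the $A/V$ deficit; I would build them as scaled, translated polynomial bumps whose swept area can be computed in closed form, allowing one to solve for the coefficients in terms of the required deficit. A final subtlety is uniformity: one must verify that the sum $\sum_i \phi_i$ (and similarly $\sum_i \psi_i$) yields a $C^m$ function globally, which requires the bump sizes to be $o(|b_i-a_i|^m)$ in the $k$th derivative for each $0 \le k \le m$, and this quantitative decay is exactly what the $A/V$ condition delivers.
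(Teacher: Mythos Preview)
The paper does not prove Theorem~\ref{t-HeisWhit}; it is quoted as a prior result from \cite{ZimSpePinWhitney,ZimWhitney}, and the only hint at the argument is the brief proof of the $C^{m,\omega}$ variant (Theorem~\ref{t-HeisWhitLip}), which simply points to Theorem~6.1 of \cite{ZimSpePinWhitney} and records the changes needed for the modulus. Your outline is a faithful sketch of the construction in those references: extend $F$ and $G$ via the classical Whitney theorem, define the vertical component by integrating the contact form, and insert compactly supported perturbations on each complementary interval to absorb the area deficit, with the $A/V$ hypothesis supplying exactly the smallness that forces the perturbations to be $o(|b_i-a_i|^m)$ in $C^m$.

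Two points are worth flagging. First, the deficit you write,
\[
H^0(b_i)-H^0(a_i)-2\int_{a_i}^{b_i}(\tilde f'\tilde g-\tilde f\tilde g'),
\]
is not literally $A(\gamma;a_i,b_i)$, since the latter is defined using the Taylor polynomials $T_{a_i}F$, $T_{a_i}G$ rather than the Whitney extensions $\tilde f,\tilde g$; the two differ by a quantity controlled by the Whitney-field estimates on $F$ and $G$, and reducing one to the other is a genuine (if routine) step in \cite{ZimSpePinWhitney}. Second, in that reference the perturbations are not generic pairs of bumps but a carefully designed family in which one can solve explicitly for the swept area, and verifying that the required area can be achieved within the available $C^m$ budget is precisely where the denominator $V$ enters. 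You correctly identify this as the ``main technical difficulty'' and leave it as such, which is appropriate for a proposal, but be aware that this is where essentially all of the work lies.
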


Condition {\em (1)} here was discussed above, and condition {\em (2)} is a consequence of the Leibniz rule as in \eqref{e-LeibnizRule}.
Condition {\em (3)} (discussed at length in Section~\ref{s-AV})
establishes a control on the rate at which the curve gathers symplectic area in the plane,
and this area is fundamentally tied to the height of a horizontal curve in the Heisenberg group. 
See \cite{ZimSpePinWhitney, GarethLusin, ZimWhitney} for more discussion on this relationship.

We will also record one direction of this result 
for $C^{m.\omega}$ curves.
The proof is similar to the one found in \cite{ZimSpePinWhitney},
and the differences are noted below.
This result is why 
Theorem~\ref{t-finiteness}
produces a curve of class $C^{m,\sqrt{\omega}}$
rather than $C^{m,\omega}$,
and it is not obvious to me how the construction in \cite{ZimSpePinWhitney} can be strengthened.
\begin{theorem}[\cite{ZimSpePinWhitney}]
\label{t-HeisWhitLip}
Suppose $K \subseteq \R$ is compact and 
$F=(F^k)_{k=0}^m$, $G=(G^k)_{k=0}^m$, and $H=(H^k)_{k=0}^m$ are collections of continuous, real valued functions on $K$.
If there is a constant $\hat{C}>0$ such that
\begin{enumerate}
    \item 
    $F$, $G$, and $H$ are Whitney fields of class $C^{m,\omega}$ on $K$,
    \item for every $1 \leq k \leq m$ and $x \in K$, the following holds on $K$:
    $$
        H^k = 2 \sum_{i=0}^{k-1}  \binom{k-1}{i}  \left(F^{k-i}G^i- G^{k-i}F^i \right),
    $$
    \item 
    and, writing $\gamma = (F_0,G_0,H_0)$,
    $$
\left| \frac{A(\g;a,b)}{V(\g;a,b)} \right| \leq \hat{C}\omega(b-a)
\quad \text{for all } a,b \in K \text{ with } a<b,
$$
\end{enumerate}
then there is a horizontal curve $\Gamma \in C^{m,\sqrt{\omega}}(\R,\R^3)$  
satisfying $\Gamma^{(k)}|_K = (F^k,G^k,H^k)$
for $0 \leq k \leq m$
\end{theorem}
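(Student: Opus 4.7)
The plan is to adapt the construction from \cite{ZimSpePinWhitney} while tracking quantitative dependence on $\omega$. First I would apply Theorem~\ref{t-WhitClassicLip} componentwise to obtain $\tilde f, \tilde g, \tilde h \in C^{m,\omega}(\R)$ with $\tilde f^{(k)}|_K = F^k$, $\tilde g^{(k)}|_K = G^k$, and $\tilde h^{(k)}|_K = H^k$ for $0 \le k \le m$. The curve $\tilde\Gamma = (\tilde f, \tilde g, \tilde h)$ interpolates the three jets but is typically not horizontal on $\R \setminus K$. Hypothesis~(2) ensures that the horizontality relation $h' = 2(f'g - fg')$ holds at every point of $K$ in the jet sense, so the non-horizontality of $\tilde\Gamma$ is entirely concentrated on the complementary gaps of $K$.

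Next, on each bounded maximal gap $(a,b)$ of $\R \setminus K$ (with unbounded components handled by polynomial extension, as in \cite{ZimSpePinWhitney}), the symplectic area defect on $(a,b)$ is the discrepancy between $\tilde h(b) - \tilde h(a)$ and $2\int_a^b (\tilde f' \tilde g - \tilde f \tilde g')\,dt$. Using the Whitney field structure together with condition~(2), this defect agrees up to lower order Whitney remainders with $A(\g;a,b)$, and hypothesis~(3) then bounds it by a constant multiple of $\omega(b-a)\cdot |V(\g;a,b)|$. Following the construction of \cite{ZimSpePinWhitney}, I would add corrections $\phi_f$ and $\phi_g$ supported in $(a,b)$ and vanishing to order $m$ at both endpoints, engineered so that the new pair $(\tilde f + \phi_f, \tilde g + \phi_g)$ absorbs this area defect exactly. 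The horizontal extension is then $\Gamma = (f,g,h)$ with $f = \tilde f + \phi_f$, $g = \tilde g + \phi_g$, and $h$ recovered on each gap by integrating $h' = 2(f'g - fg')$ anchored at $H^0(a)$; condition~(2) ensures that $h$ matches the prescribed Whitney data at $b$ in all $m$ derivatives as well.

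The main obstacle, and the reason the resulting regularity is only $C^{m,\sqrt{\omega}}$, is estimating $\phi_f$ and $\phi_g$. The symplectic area generated by a pair of planar bumps is bilinear in their amplitudes, so to produce area of size $\omega(b-a)\cdot |V(\g;a,b)|$ one is forced to take amplitudes of order $\sqrt{\omega(b-a)}$ after absorbing the appropriate powers of $(b-a)$ into the bump profile. Differentiating $m$ times and comparing across scales $|x-y| \sim b-a$, one obtains oscillations of the $m$-th derivative of size $\sqrt{\omega(b-a)}$ rather than $\omega(b-a)$. Patching these gap-by-gap bounds into a single Whitney field of class $C^{m,\sqrt{\omega}}$ on $\R$, verifying compatibility at accumulation points of $K$, and then invoking Theorem~\ref{t-WhitClassicLip} with modulus $\sqrt{\omega}$, proceeds exactly as in the $\epsilon$-version argument of \cite{ZimSpePinWhitney} with $\epsilon$ replaced throughout by a constant multiple of $\sqrt{\omega}$ evaluated at the relevant scale. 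Hypothesis~(3) enters precisely to control the amplitudes of the gap-by-gap corrections.
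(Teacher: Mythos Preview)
Your proposal is correct and follows essentially the same approach as the paper's proof: both adapt the construction of \cite{ZimSpePinWhitney}, invoking Theorem~\ref{t-WhitClassicLip} in place of the $C^m$ Whitney extension and tracking that the gap corrections (whose amplitudes scale like the square root of the area defect) yield modulus $\sqrt{\omega}$ on the $m$th derivative. One minor point: the paper extends only $F$ and $G$ via Whitney and builds $h$ directly by integration on gaps, whereas you also extend $H$ to a $\tilde h$ you never actually use---this is harmless redundancy, and your identification of the bilinear area mechanism as the source of the $\sqrt{\omega}$ loss matches the paper's remark that $\beta \lesssim \sqrt{\hat\alpha}$.
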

\begin{proof}
This follows from the proof of Theorem~6.1 in \cite{ZimSpePinWhitney}.
We note the differences here.
Rather than invoking Whitney's original extension theorem (which is Theorem~\ref{t-WhitClassic} in this paper and Theorem~2.8 in \cite{ZimSpePinWhitney}), we instead use Theorem~\ref{t-WhitClassicLip} above to extend the Whitney fields $F$ and $G$ to $C^{m,\omega}$ functions $f$ and $g$ on $\R$. 
Moreover, it follows from the definition of the Whitney field of class $C^{m,\omega}$ 
and condition {\em(3)} above
that we may replace the modulus of continuity $\alpha$ in the estimates (2.3), (2.4), and (6.1)-(6.6)
in \cite{ZimSpePinWhitney}
with a constant multiple of $\omega$.

Since $K$ is compact, the modulus of continuity $\beta$ in Proposition~6.2 may then be replaced by a constant multiple of $\sqrt{\omega}$.
(This is because, in \cite{ZimSpePinWhitney}, $\beta$ is bounded by a constant multiple of $\sqrt{\hat{\alpha}} = \sqrt{\alpha + \alpha^2}$.
This is where the drop in regularity occurs.)
The proofs of Lemma~6.7 and Proposition~6.8 then follow.
\end{proof}

\subsection{Divided differences}
\label{s-DD}
Fix $A \subseteq \R$.
For any $f:A \to \mathbb{R}$ and
any set of $m+1$ distinct points $X = \{ x_0,\dots,x_m \}\subseteq A$, define
\begin{align}
f[x_0] &= f(x_0) \nonumber \\
f[x_0,\dots,x_k] &= \frac{f[x_1,\dots, x_k] - f[x_0,\dots,x_{k-1}]}{x_k-x_0}
\quad
\text{for } 1 \leq k \leq m. \label{e-dddefine}
\end{align}
We will call $f[X] = f[x_0,\dots,x_m]$ an {\em $m$th divided difference of $f$},
and, if $K \subseteq \R$ is compact, 
we say that the {\em $m$th divided differences of $f$ converge uniformly on K} if,
for every $\varepsilon > 0$, there is a $\delta > 0$ such that
$
|f[X] - f[Y]| <\varepsilon
$
whenever 
$X$ and $Y$ are sets of $m+1$ distinct points in $K$ and
$\diam(X \cup Y)<\delta$.
For $\gamma:K \to \R^3$,
we define $\gamma[X] := (f[X],g[X],h[X])$.
The following equivalent definition of divided differences for $C^m$ functions is Theorem~2 on page 250 of \cite{NumAnalBook}.
\begin{proposition}
\label{p-intgral}
Fix $m+1$ distinct points $x_0,\dots,x_m \in \R$.
If $f \in C^m(I)$ where $I$ is some interval containing $\{ x_0,\dots,x_m \}$, then 
\begin{align*}
    f[x_0,\dots,x_m]
    &=
    \int_0^1 \int_0^{t_1} \cdots \int_0^{t_{m-1}}
    f^{(m)} ( t_m (x_m - x_{m-1}) + \cdots \\
    & \hspace{2in} + t_1 (x_1 - x_{0}) + x_0 ) \, dt_m \cdots dt_2 dt_1.
\end{align*}
\end{proposition}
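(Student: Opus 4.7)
The plan is to prove this by induction on $m$, which is the classical Hermite--Genocchi identity. For the base case $m=1$, the right-hand side is $\int_0^1 f'(t_1(x_1-x_0)+x_0)\,dt_1$, which by the substitution $u=t_1(x_1-x_0)+x_0$ equals $(x_1-x_0)^{-1}\int_{x_0}^{x_1} f'(u)\,du = (f(x_1)-f(x_0))/(x_1-x_0) = f[x_0,x_1]$, as required.

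For the inductive step, assume the formula holds for all $(m-1)$-fold divided differences and let $I_m$ denote the right-hand side. The key move is to carry out the innermost integration over $t_m$ first. Writing $C = t_{m-1}(x_{m-1}-x_{m-2}) + \cdots + t_1(x_1-x_0)+x_0$, which is independent of $t_m$, and using the antiderivative of $f^{(m)}$, one obtains
\begin{equation*}
\int_0^{t_{m-1}} f^{(m)}\!\bigl(t_m(x_m-x_{m-1})+C\bigr)\,dt_m
= \frac{f^{(m-1)}(t_{m-1}(x_m-x_{m-1})+C) - f^{(m-1)}(C)}{x_m-x_{m-1}},
\end{equation*}
which is valid since $x_m\neq x_{m-1}$. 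Noting that $t_{m-1}(x_m-x_{m-1})+C = t_{m-1}(x_m-x_{m-2})+t_{m-2}(x_{m-2}-x_{m-3})+\cdots+x_0$, the remaining $(m-1)$-fold integral splits into two iterated integrals of exactly the form handled by the inductive hypothesis, applied to the point tuples $(x_0,\ldots,x_{m-2},x_m)$ and $(x_0,\ldots,x_{m-2},x_{m-1})$ respectively. Thus
\begin{equation*}
I_m = \frac{f[x_0,\ldots,x_{m-2},x_m] - f[x_0,\ldots,x_{m-2},x_{m-1}]}{x_m - x_{m-1}}.
\end{equation*}

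To conclude $I_m = f[x_0,\ldots,x_m]$, I would compare this expression with the recursive definition \eqref{e-dddefine}, which takes a different pair of endpoints. The main obstacle, and really the only nontrivial bookkeeping step, is establishing the \emph{symmetry} of divided differences: the value $f[x_0,\ldots,x_m]$ is invariant under permutations of its arguments. I would prove this as a separate lemma by induction on $m$, using the recursion \eqref{e-dddefine} together with the identity
\begin{equation*}
\frac{f[x_1,\ldots,x_m]-f[x_0,\ldots,x_{m-1}]}{x_m-x_0} = \frac{f[x_0,\ldots,x_{m-2},x_m]-f[x_0,\ldots,x_{m-1}]}{x_m-x_{m-1}}
\end{equation*}
obtained by clearing denominators and checking that both expressions equal a common symmetric quantity (alternatively, one first derives the explicit formula $f[x_0,\ldots,x_m] = \sum_{j=0}^m f(x_j)/\prod_{i\neq j}(x_j-x_i)$). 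Given symmetry, the displayed expression for $I_m$ is just $f[x_0,\ldots,x_m]$ written with the ``top'' and ``second from top'' points swapped with the first and second in the recursion, completing the induction.
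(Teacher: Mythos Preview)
Your inductive argument is correct and is the standard proof of the Hermite--Genocchi identity; the base case, the innermost integration, the identification of the two resulting $(m-1)$-fold integrals with divided differences on the tuples $(x_0,\ldots,x_{m-2},x_m)$ and $(x_0,\ldots,x_{m-1})$, and the appeal to symmetry are all sound. Note, however, that the paper does not supply its own proof of this proposition at all: it simply quotes the result as Theorem~2 on page~250 of \cite{NumAnalBook}, so there is no in-paper argument to compare against. Your write-up therefore goes beyond what the paper does, and the symmetry lemma you flag (proved most cleanly via the explicit partial-fraction formula $f[x_0,\ldots,x_m]=\sum_j f(x_j)/\prod_{i\neq j}(x_j-x_i)$) is indeed the one piece of genuine bookkeeping needed to reconcile the recursion you obtain with the one in \eqref{e-dddefine}.
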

In particular,
as long as $f$ is of class $C^m$,
the map 
$(x_0,\dots,x_m) \mapsto f[x_0,\dots,x_m]$ extends to a continuous function on $I^{m+1}$,
and the recursive condition \eqref{e-dddefine}
holds for sets of not necessarily distinct
points.

\subsection{Newton interpolation polynomials}
Given a 
set $A \subseteq \R$,
a function $f:A \to \R$, and a
finite set $X = \{x_0,\dots,x_k\} \subseteq A$, 
the associated Newton interpolation polynomial is defined as
\begin{align*}
P(X;f)(x)
= f[x_0]+(x-x_0)f[x_0,x_1] 
+ \cdots+ 
(x-x_0)\cdots(x-x_{k-1})f[x_0,\dots,x_k].
\end{align*}
This is the unique polynomial of degree at most $k$ which satisfies 
$P(x_i) = f(x_i)$ for $i= 0,\dots,k$.

\begin{lemma}
\label{l-poly}
Suppose 
$\alpha$ is a modulus of continuity and
$f \in C^{m,\alpha}(I)$ for some compact interval $I$.
There is a constant $C>0$ 
depending only on $m$ and $\Vert f \Vert_{C^{m,\alpha}(I)}$
such that, for any
$X \subseteq I$
with $\#X = m+1$ 
and $P = P(X;f)$,
\begin{equation}
\label{e-poly2}   
\frac{|f(x)-P(x)|}{\diam(X)^{m}} \leq C\alpha (\diam(X))
\quad
\text{and}
\quad
\frac{|f'(x)-P'(x)|}{\diam(X)^{m-1}} \leq C\alpha (\diam(X))
\end{equation}
for all $x \in [\min X,\max X]$.
\end{lemma}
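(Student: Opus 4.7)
The plan is to apply Rolle's theorem iteratively to the error $E := f - P$ to propagate its interpolation zeros to successive derivatives, then integrate back up using the H\"older-type control on $E^{(m)}$ inherited from $f \in C^{m,\alpha}([0,1])$.

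First, I would observe that since $P$ agrees with $f$ at the $m+1$ points of $X$, the error $E$ vanishes at each point of $X$. By iterated applications of Rolle's theorem, $E^{(k)}$ has at least $m+1-k$ zeros in $[\min X, \max X]$ for each $0 \leq k \leq m$; in particular, there exists $\xi \in [\min X, \max X]$ with $E^{(m)}(\xi) = 0$. Since $P$ is a polynomial of degree at most $m$, $P^{(m)}$ is constant, so $f^{(m)}(\xi) = P^{(m)}$. Combining this with the definition of the $C^{m,\alpha}$ seminorm and monotonicity of $\alpha$, I obtain
$$
|E^{(m)}(y)| = |f^{(m)}(y) - f^{(m)}(\xi)| \leq \|f\|_{C^{m,\alpha}([0,1])}\, \alpha(\diam X)
$$
for every $y \in [\min X, \max X]$.

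To recover the estimates on $E$ and $E'$, I would then perform a downward induction on $k$. Suppose $|E^{(k+1)}| \leq C_{k+1}\, \alpha(\diam X)\, \diam(X)^{m-k-1}$ on $[\min X, \max X]$ for some $k \leq m-1$. Since $E^{(k)}$ has at least one zero $\zeta$ in this interval and every $y \in [\min X, \max X]$ satisfies $|y - \zeta| \leq \diam X$, the fundamental theorem of calculus yields
$$
|E^{(k)}(y)| = \left| \int_\zeta^y E^{(k+1)}(s)\, ds \right| \leq C_{k+1}\, \alpha(\diam X)\, \diam(X)^{m-k}.
$$
Specializing to $k=1$ and $k=0$ gives the two required estimates, with constant depending only on $m$ (through the number of iteration steps) and $\|f\|_{C^{m,\alpha}([0,1])}$. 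I do not anticipate a serious obstacle; the only point to verify carefully is that each zero produced by Rolle's theorem lies in $[\min X, \max X]$, which is automatic since every such zero is strictly between two previously found zeros in the same interval.
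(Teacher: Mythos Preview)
Your proof is correct and takes a genuinely different route from the paper's. The paper works entirely through divided differences: it first uses the integral representation in Proposition~\ref{p-intgral} to show that $(y_0,\dots,y_m)\mapsto f[y_0,\dots,y_m]$ has modulus of continuity $M(2m+1)\alpha$, and then exploits the Newton error formula $f(x)-P(x)=f[x_0,\dots,x_m,x]\prod_i(x-x_i)$ together with an explicit identity for $\tfrac{d}{dx}f[x_0,\dots,x_m,x]$ to bound both $|f-P|$ and $|f'-P'|$. Your Rolle-then-integrate argument sidesteps all of this machinery: it needs neither the integral formula nor the error representation, and the downward induction does not inflate the constant, so you actually obtain the sharper value $C=\Vert f\Vert_{C^{m,\alpha}([0,1])}$ with no $m$-dependence. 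The paper's approach, by contrast, dovetails with the divided-difference tools used elsewhere in Sections~\ref{s-AV} and~\ref{s-dAV}, and the error formula remains valid for $x$ outside $[\min X,\max X]$; your argument is intrinsically confined to the convex hull of $X$, which is precisely what the lemma asks for.
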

\begin{proof}
Write $M = \Vert f \Vert_{C^{m,\alpha}(I)}$.
For any $y_0,\dots,y_m,z_0,\dots,z_m \in I$, Proposition~\ref{p-intgral} gives 
\begin{align*}
    |f[y_0,\dots,y_m] &- f[z_0,\dots,z_m]|\\
    &\leq
    M  \int_0^1 \int_0^{t_1} \cdots \int_0^{t_{m-1}}
    \alpha \big( |t_m ((y_m - y_{m-1})-(z_m-z_{m-1})) + \cdots \\
    &\hspace{1in} + t_1 ((y_1 - y_{0})-(z_1-z_{0})) + (y_0 - z_0)| \big) \, dt_m \cdots dt_2 dt_1\\
    &\leq
    M  \alpha \big(|y_m-z_m| + 2|y_{m-1}-z_{m-1}| + \cdots + 2|y_1-z_1| + 2|y_0-z_0|\big)\\
    &\leq M (2m+1) \alpha\bigg(\max_{i} |y_i - z_i|\bigg).
\end{align*}
Therefore, if we have $m+1$ distinct points
$X = \{x_0,\dots,x_{m}\} \subset I$
with $P = P(X;f)$
then, according to the definition of $P$,
we have
for any $x \in [\min X,\max X]$ with $x \neq x_0$ that
\begin{align*}
|f(x)-P(x)| 
&= \left|f[x_0,\dots,x_m,x] (x-x_0)\cdots (x-x_m)\right|\\
&=\frac{|f[x_1,\dots,x_m,x] - f[x_0,\dots,x_m]|}{|x-x_0|}|x-x_0|\cdots |x-x_m|\\
&=\left| f[x_1,\dots,x_m,x] - f[x_0,\dots,x_m] \right||x-x_1|\cdots |x-x_m|\\
&\leq M (2m+1) \alpha(\diam(X)) \diam(X)^m.
\end{align*}
Since $f(x_0) = P(x_0)$, this gives the first inequality in \eqref{e-poly2}.

Now, for every $x \in [\min X, \max X]$
with $x \neq x_0$ and $x \neq x_1$,
Problem~7 on page 255 of \cite{NumAnalBook} 
and the symmetry of divided differences
implies that
\begin{align*}
    \frac{d}{dx} f[x_0,\dots,x_m,x]
    &=
    f[x_0,\dots,x_m,x,x]
    =
    \frac{f[x_1,\dots,x_m,x,x] - f[x_0,\dots,x_m,x]}{x-x_0}\\
    &=
    \frac{f[x_2,\dots,x_m,x,x] - f[x_1,\dots,x_m,x]}{(x-x_0)(x-x_1)} \\
    & \qquad \qquad-
    \frac{f[x_0,x_2\dots,x_m,x] - f[x_1,x_0,x_2,\dots,x_m]}{(x-x_0)(x-x_1)}.
\end{align*}
Thus, as above,
\begin{align*}
|f'(x)-P'(x)| 
&\leq \left|\frac{d}{dx} f[x_0,\dots,x_m,x] \cdot 
(x-x_0)\cdots (x-x_m)\right|\\
& \qquad +
|f[x_0,\dots,x_m,x]|\sum_{i=0}^m \prod_{j \neq i} |x-x_j|
\\
&\leq 
M (2m+1) (m+3) \alpha(\diam(X)) \diam(X)^{m-1}.
\end{align*}
The continuity of $f'$ and $P'$ gives the second inequality in \eqref{e-poly2}.
\end{proof}

\section{The $A/V$ condition}
\label{s-AV}
The following quantities were first defined in \cite{ZimSpePinWhitney}
to establish Theorem~\ref{t-HeisWhit}.
\begin{definition}
Suppose 
$F = (F^{k})_{k=0}^m$ and $G = (G^{k})_{k=0}^m$ are collections of continuous, real valued functions on a set $E \subseteq \R$,
and suppose $h:E \to \R$ is continuous.
Set $\gamma=(f,g,h) := (F^0,G^0,h)$.
For each $a,b \in E$, define the {\em area discrepancy} $A(\gamma;a,b)$ and {\em velocity} $V(\g ;a,b)$ as follows 
\begin{align*}
A(\g;a,b) &= h(b) - h(a) - 2 \int_a^b ((T_a F)'T_a G - (T_a G)'T_aF)\\
& \hspace{1in} +2f(a)(g(b) - T_a G(b)) - 2g(a)(f(b) - T_aF(b))\\
V(\g;a,b) &= (b-a)^{2m} + (b-a)^m \int_a^b \left( |(T_aF)'|+ |(T_aG)'| \right)
\end{align*}
\end{definition}
If $\gamma \in C^m(\R,\R^3)$,
we use the collections $F = (f^{(k)})_{k=0}^m$ and $G = (g^{(k)})_{k=0}^m$ in this definition as before unless otherwise noted.

\begin{definition}
Suppose $F = (F^{k})_{k=0}^m$ and $G = (G^{k})_{k=0}^m$ are collections of continuous, real valued functions on a set $E \subseteq \R$,
and suppose $h:E \to \R$ is continuous.
Set $\gamma=(f,g,h) := (F^0,G^0,h)$.
We
say that $\gamma$ satisfies the {\em $A/V$ condition on $E$}
if,
for all $\varepsilon > 0$, there is a $\delta > 0$ such that
$$
\frac{A(\g;a,b)}{V(\g;a,b)} < \varepsilon
\quad
\text{for all } a,b \in E \text{ with } |b-a|<\delta
.
$$
\end{definition}
Note that such a map $\gamma$ satisfies the $A/V$ condition vacuously on every finite set $E$.
As seen above, the $A/V$ condition is part of the necessary and sufficient conditions to guarantee the existence of smooth, horizontal extensions in Theorem~\ref{t-HeisWhit}.

We will now make a few observations about the quantities $A$ and $V$.
The following shows that they are left invariant with respect to the group operation on $\H$.
In particular, it will allow us to assume without loss of generality that $\gamma(a)=0$
when working with $A$ and $V$.
\begin{lemma}
\label{l-AVleftinv}
Suppose $\g \in C^m(\R,\R^3)$.
For any $p \in \H$ and $a,b \in \R$,
we have 
$$
A(p * \g;a,b) = A(\g;a,b) \quad \text{and} \quad V(p * \g;a,b) = V(\g;a,b).
$$
\end{lemma}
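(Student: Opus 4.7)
The plan is to prove both identities by direct computation, exploiting the fact that left translation by $p=(p_1,p_2,p_3) \in \H$ affects the first two coordinates of $\gamma = (f,g,h)$ only by adding constants.

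Explicitly, the group law gives
\[
p * \gamma = \bigl(p_1 + f,\; p_2 + g,\; p_3 + h + 2(p_2 f - p_1 g)\bigr).
\]
Let $\tilde F$, $\tilde G$, $\tilde h$ denote the components of the jet associated to $p*\gamma$. Then $\tilde F^{(k)} = F^{(k)}$ and $\tilde G^{(k)} = G^{(k)}$ for all $k \geq 1$, so only the zeroth-order components shift: $T_a \tilde F = p_1 + T_a F$ and $T_a \tilde G = p_2 + T_a G$. Consequently $(T_a \tilde F)' = (T_a F)'$ and $(T_a \tilde G)' = (T_a G)'$. The invariance of $V$ is then immediate from its definition, since $V$ depends on $\gamma$ only through $(T_a F)'$ and $(T_a G)'$ together with $b-a$.

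For $A$, I would substitute the shifted jets into the definition and collect terms. Inside the integral,
\[
(T_a \tilde F)' T_a \tilde G - (T_a \tilde G)' T_a \tilde F
= (T_a F)' T_a G - (T_a G)' T_a F + p_2 (T_a F)' - p_1 (T_a G)',
\]
and the last two pieces integrate over $[a,b]$ to $p_2(T_aF(b)-f(a)) - p_1(T_aG(b)-g(a))$. The boundary terms $2\tilde f(a)(\tilde g(b) - T_a \tilde G(b))$ and $-2\tilde g(a)(\tilde f(b) - T_a \tilde F(b))$ simplify because the constants $p_2$ cancel between $\tilde g(b)$ and $T_a \tilde G(b)$ (similarly for $F$), leaving $2(p_1 + f(a))(g(b) - T_a G(b))$ and $-2(p_2 + g(a))(f(b) - T_a F(b))$. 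Finally, the height difference contributes an extra $2p_2(f(b)-f(a)) - 2p_1(g(b)-g(a))$ on top of $h(b)-h(a)$.

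The main (and only) obstacle is verifying that all terms linear in $p_1$ and $p_2$ cancel; this is a short but slightly finicky bookkeeping exercise. Collecting $p_1$-terms yields
\[
-2p_1(g(b)-g(a)) + 2p_1(T_a G(b) - g(a)) + 2p_1(g(b) - T_a G(b)) = 0,
\]
and the $p_2$-terms cancel by the symmetric computation. The constant $p_3$ never appears since it enters $\tilde h$ additively and drops out of the difference $\tilde h(b) - \tilde h(a)$. What remains is exactly $A(\gamma;a,b)$, completing the proof.
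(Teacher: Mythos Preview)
Your proof is correct and follows essentially the same strategy as the paper: compute $p*\gamma$ explicitly, observe that the Taylor polynomials of the first two coordinates shift by constants (so $V$ is trivially invariant), and verify by direct bookkeeping that all terms involving $p_1,p_2,p_3$ in the expression for $A$ cancel. The only difference is organizational: the paper first treats the special case $\gamma(a)=0$ (which kills the last two boundary terms in $A$ and shortens the computation), and then reduces the general case to this one via the group identity $A(\gamma;a,b)=A(\gamma(a)^{-1}*\gamma;a,b)$; you instead carry out the full cancellation in one pass for arbitrary $\gamma$.
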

\begin{proof}
Fix $a,b, \in \R$ and $p \in \H$.
Write $\gamma = (f,g,h)$ and $p = (x,y,z)$,
and write $\hat{\gamma} = (\hat{f},\hat{g},\hat{h}) = p * \gamma$.
We then have
$$
\hat{f} = x + f,
\qquad
\hat{g} = y + g,
\qquad
\hat{h} = z + h + 2(yf-xg).
$$
Notice also that $T_a\hat{f} = T_af +x$ and $T_a \hat{g} = T_ag +y$.
Clearly, $V(p*\g,a,b) = V(\g,a,b)$.

Assume first that $\gamma(a) = 0$.
In this case, $\hat{\g}(a) = p$, so
\begin{align*}
A(p * \g;a,b) &= \hat{h}(b) - \hat{h}(a) - 2 \int_a^b \left((T_a \hat{f})'T_a \hat{g} - (T_a \hat{g})'T_a\hat{f}\right)\\
& \hspace{1in} +2\hat{f}(a)(\hat{g}(b) - T_a \hat{g}(b)) - 2\hat{g}(a)(\hat{f}(b) - T_a\hat{f}(b))\\
&= h(b) + 2(yf(b) - xg(b)) - 2 \int_a^b \left((T_a f)'(T_a g + y) - (T_a g)'(T_af+x)\right)\\
& \hspace{1in} +2x(g(b) - T_a g(b)) - 2y(f(b) - T_af(b))\\
&= h(b) - 2 \int_a^b \left((T_a f)'T_a g - (T_a g)'T_af\right)\\
& \hspace{1in} 
-2y \int_a^b (T_a f)' + 2x \int_a^b (T_a g)'
- 2xT_a g(b) + 2y T_af(b)\\
&= A(\g;a,b).
\end{align*}

If $\g(a)$ is arbitrary, then, since $\tilde{\g} := \g(a)^{-1} * \g$ satisfies $\tilde{\g}(a)=0$, we have from above
\begin{align*}
    A(\g;a,b) = A\left(\g(a)*\tilde{\g};a,b\right)
    =A\left(\tilde{\g};a,b\right)
    = A\left((p * \g(a)) * \tilde{\g};a,b\right)
    =A(p*\g;a,b).
\end{align*}
\end{proof}

When $\g$ is smooth, we are allowed to swap $a$ and $b$ in $A(\g;a,b)$ if we account for a small error term.

\begin{lemma}
\label{l-AVswap}
Suppose $\g \in C^m(\R,\R^3)$ and $K \subseteq \R$ is compact.
Assume $\alpha$ is a modulus of continuity 
such that $f$ and $g$ satisfy 
\eqref{e-taylor0}--\eqref{e-taylor2}
for all $x$ and $y$ in an interval containing $K$.
Then, for any $a,b \in K$,
we have 
$$
|A(\g;b,a)| \lesssim_{\g,K} |A(\g;a,b)| + \alpha(|b-a|)|b-a|^m.
$$
\end{lemma}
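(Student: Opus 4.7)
The plan is to show that $A(\gamma;a,b)$ and $A(\gamma;b,a)$ are almost negatives of each other, up to an error of size $\alpha(|b-a|)|b-a|^m$; the stated estimate then follows from the triangle inequality. To make the computation manageable, I would first normalize using Lemma~\ref{l-AVleftinv}: replacing $\gamma$ by $\gamma(a)^{-1}*\gamma$ does not change either $A(\gamma;a,b)$ or $A(\gamma;b,a)$, and it also does not affect the Taylor hypotheses \eqref{e-taylor0}--\eqref{e-taylor2} for the first two components $f,g$ (they are shifted only by constants). So we may assume $f(a)=g(a)=h(a)=0$.

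Under this normalization, the cross terms in $A(\gamma;a,b)$ disappear and the cross terms in $A(\gamma;b,a)$ simplify using only $f(b),g(b)$ and the values of $T_b f,T_b g$ at $a$. Adding the two expressions, the $\pm h(b)$ terms cancel and one is left with
\[
A(\gamma;a,b)+A(\gamma;b,a)
= 2\!\int_a^b\!\bigl[(T_b f)'T_b g-(T_b g)'T_b f-(T_a f)'T_a g+(T_a g)'T_a f\bigr]
-2f(b)T_b g(a)+2g(b)T_b f(a).
\]
The next step is to compare the Taylor polynomials of $f,g$ based at $a$ and at $b$ on the interval between them. By \eqref{e-taylor1} applied to both basepoints and the triangle inequality through the common value $f(x)$, one obtains $|T_a f(x)-T_b f(x)|\lesssim \alpha(|b-a|)|b-a|^m$ for every $x\in[a,b]$, and \eqref{e-taylor2} similarly yields $|(T_a f)'(x)-(T_b f)'(x)|\lesssim \alpha(|b-a|)|b-a|^{m-1}$; the analogous bounds hold for $g$.

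Combining this with the fact that $T_a f,T_a g$ and their derivatives are uniformly bounded on $K$ (by the $C^m$ regularity of $\gamma$ on a neighborhood of $K$), a product-rule style splitting like $(T_b f)'T_b g-(T_a f)'T_a g=((T_b f)'-(T_a f)')T_b g+(T_a f)'(T_b g-T_a g)$ shows the integrand is pointwise $\lesssim_{\gamma,K}\alpha(|b-a|)|b-a|^{m-1}$, so the integral contributes $\lesssim_{\gamma,K}\alpha(|b-a|)|b-a|^m$. For the boundary terms, since $f(a)=0$ we have $|T_b f(a)|=|T_b f(a)-f(a)|\lesssim\alpha(|b-a|)|b-a|^m$ by \eqref{e-taylor1}, and $|f(b)|\leq \|f\|_{L^\infty(K)}$; the same estimate handles $g(b)T_b f(a)$. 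Hence $|A(\gamma;a,b)+A(\gamma;b,a)|\lesssim_{\gamma,K}\alpha(|b-a|)|b-a|^m$, and the triangle inequality finishes the proof. The main obstacle is purely bookkeeping: matching up each cross term in the two expressions of $A$ with its near-cancellation partner and keeping careful track of which factor absorbs the Taylor error. No genuinely nontrivial analytic input beyond \eqref{e-taylor0}--\eqref{e-taylor2} and left-invariance is needed.
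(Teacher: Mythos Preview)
Your proposal is correct and follows essentially the same approach as the paper: both normalize via left-invariance so that $\gamma(a)=0$, then compare the integrals $\int_a^b (T_af)'T_ag-(T_ag)'T_af$ and $\int_a^b (T_bf)'T_bg-(T_bg)'T_bf$ using the Taylor estimates \eqref{e-taylor1}--\eqref{e-taylor2} (passing through the common values $f(x),g(x),f'(x),g'(x)$), and bound the boundary terms $f(b)T_bg(a)$, $g(b)T_bf(a)$ by observing $|T_bf(a)|=|T_bf(a)-f(a)|\le\alpha(|b-a|)|b-a|^m$. The only cosmetic difference is that you package the computation as an estimate on $|A(\gamma;a,b)+A(\gamma;b,a)|$, whereas the paper inserts $\pm 2\int_b^a\bigl((T_af)'T_ag-(T_ag)'T_af\bigr)$ directly into the expansion of $|A(\gamma;b,a)|$ and identifies the leading piece as $|A(\gamma;a,b)|$; these are the same triangle inequality in different clothing.
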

\begin{proof}
According to the previous lemma, we may assume that $\g(a)=0$.
Thus 
\begin{align}
    | A&\left(\gamma;b,a\right)| \nonumber \\
    &=
    \left| - h(b) - 2\int_b^a \left((T_bf)'T_bg - (T_bg)'T_bf\right)
    - 2f(b) T_bg(a) +2g(b) T_bf(a)
    \right| \nonumber \\
    &=
    \left| - h(b) - 2\int_b^a \left((T_af)'T_ag - (T_ag)'T_af\right)
    \right| \label{e-firstline}\\
    &\qquad +
     2\left|\int_b^a \left((T_af)'T_ag - (T_ag)'T_af\right) - \left((T_bf)'T_bg - (T_bg)'T_bf\right)\right|
     \label{e-secondline}\\
    &\qquad + 2\left|f(b) T_bg(a) - g(b) T_bf(a) \right| \nonumber.
\end{align}
Note that \eqref{e-firstline} is exactly $|A(\g;a,b)|$.
Also, for any $x$ between $a$ and $b$, we have
\begin{align*}
    \left|T_ag(x) - T_bg(x) \right| 
    &\leq 
    \left|T_ag(x) - g(x) \right| + \left|g(x) - T_bg(x) \right| \\
    &\leq \alpha(|x-b|)|x-b|^m + \alpha(|x-a|)|x-a|^m\\
    &\leq
    2\alpha(|b-a|)|b-a|^m.
\end{align*}
A similar argument gives $\left|(T_af)'(x) - (T_bf)'(x) \right| \leq 2 \alpha(|b-a|)|b-a|^{m-1}$ so that
\begin{align*}
\left|(T_af)'T_ag - (T_bf)'T_bg\right| 
&\leq |T_ag|\left|(T_af)' - (T_bf)'\right| + \left|(T_bf)'\right| \left|T_ag - T_bg\right| \\
&\lesssim_{f,g,K} \alpha(|b-a|)|b-a|^{m-1}.
\end{align*}
Swapping $f$ and $g$ and arguing similarly, we bound \eqref{e-secondline} by a constant multiple of $\alpha(|b-a|)|b-a|^m$.
Moreover, since $\g(a)=0$,
\begin{align*}
    |f(b) T_bg(a) - g(b) T_bf(a)|
    &\leq
    |f(b)| |T_bg(a)-g(a)| + |g(b)||T_bf(a) - f(a)|\\
    &\lesssim_{f,g,K} \alpha(|b-a|) |b-a|^m.
\end{align*}
This proves the lemma.
\end{proof}

\subsection{$A/V$ and horizontality}
The following is possibly the most useful observation from this paper.
As long as a $C^m$ curve satisfies the $A/V$ condition on a compact set $K \subseteq \R$,
the following lemma ensures that
we may drop the horizontality assumption (condition {\em (2)} in Theorem~\ref{t-HeisWhit}) on $K$.

\begin{lemma}
\label{l-horiz}
Suppose $f, g, h \in C^m(\R)$
and $K \subseteq \R$ is compact. 
If $\gamma =(f,g,h)$ 
satisfies the $A/V$ condition on $K$, 
then there is some $\hat{h} \in C^m(\mathbb{R})$
such that $\hat{h} = h$ on $K$, 
and
\begin{equation}
    \label{e-horiz1}
    \hat{h}' =  2(f'g-fg') \quad \text{ on } K.
\end{equation}
\end{lemma}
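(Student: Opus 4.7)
The plan is to build a specific $m$-jet $H$ on $K$ that encodes the derivatives a horizontal extension of $\g$ is forced to have, verify that $H$ is a Whitney field of class $C^m$ on $K$, and then apply Whitney's classical extension theorem (Theorem~\ref{t-WhitClassic}) to extract $\hat h$. Concretely, fix any $\phi \in C^m(\R)$ with $\phi' = 2(f'g - fg')$, which exists because $2(f'g - fg') \in C^{m-1}(\R)$, and define $H^0 = h|_K$ together with $H^k = \phi^{(k)}|_K$ for $1 \leq k \leq m$; by the Leibniz rule, these upper components match the expression from condition~\eqref{c-leibniz} of Theorem~\ref{t-HeisWhit}. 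Once $H$ is shown to be a Whitney field of class $C^m$ on $K$, Theorem~\ref{t-WhitClassic} produces $\hat h \in C^m(\R)$ with $\hat h^{(k)}|_K = H^k$; in particular $\hat h|_K = h|_K$ and $\hat h'|_K = H^1 = 2(f'g - fg')|_K$, which is \eqref{e-horiz1}.

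For $1 \leq k \leq m$, the Whitney-field estimate $|H^k(b) - T_a^{m-k} H^k(b)| = o(|b-a|^{m-k})$ is immediate from Taylor's theorem applied to $\phi \in C^m(\R)$, since $T_a^{m-k} H^k$ coincides with $T_a^{m-k} \phi^{(k)}$ by construction. The real content is the $k = 0$ estimate. Unpacking the definitions gives
$$H^0(b) - T_a^m H^0(b) = [h(b) - h(a)] - [\phi(b) - \phi(a)] + [\phi(b) - T_a^m \phi(b)],$$
where the last bracket is $o(|b-a|^m)$ by Taylor, and the middle bracket equals $2\int_a^b(f'g - fg')$ by the choice of $\phi$. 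Matching this with the definition of $A(\g;a,b)$, the remaining quantity equals $A(\g;a,b)$ plus correction integrals involving $((T_a f)' - f') T_a g$ and $f'(T_a g - g)$ (together with the $f \leftrightarrow g$ swaps) plus boundary terms $f(a)(g(b) - T_a g(b))$ and $g(a)(f(b) - T_a f(b))$. Using the Taylor estimates \eqref{e-taylor1} and \eqref{e-taylor2} uniformly on $K$, each such correction is $o(|b-a|^m)$.

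The main obstacle is therefore controlling $A(\g;a,b)$ from the $A/V$ hypothesis alone. The key observation is that on the compact set $K$ the polynomial derivatives $(T_a f)'$ and $(T_a g)'$ are uniformly bounded for $a \in K$ and arguments in a bounded neighborhood of $K$, so $\int_a^b(|(T_a f)'| + |(T_a g)'|) \lesssim_{\g,K} (b-a)$, yielding $V(\g;a,b) \lesssim_{\g,K} (b-a)^{m+1}$ for $|b-a|$ small. Combined with the hypothesis $A(\g;a,b)/V(\g;a,b) \to 0$, this forces $A(\g;a,b) = o((b-a)^{m+1}) = o((b-a)^m)$. Assembling all bounds gives $|H^0(b) - T_a^m H^0(b)| = o(|b-a|^m)$, which completes the Whitney-field verification for $H$ and thus the construction of $\hat h$.
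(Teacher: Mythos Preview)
Your approach is essentially the paper's: define the $m$-jet $H$ with $H^0=h|_K$ and $H^k=\phi^{(k)}|_K$ for $k\ge 1$, reduce the Whitney-field check to level $k=0$, and identify $H^0(b)-T_a^mH^0(b)$ with $A(\g;a,b)$ plus Taylor-controlled corrections that are $o(|b-a|^m)$. The paper differs only tactically: it invokes left-invariance (Lemma~\ref{l-AVleftinv}) to assume $\g(a)=0$, which kills your boundary terms $f(a)(g(b)-T_ag(b))$ and $g(a)(f(b)-T_af(b))$ outright, and it compares $T_a^{m-1}\eta$ with $(T_a^m f)'T_a^m g-(T_a^m g)'T_a^m f$ via a polynomial-degree argument rather than the pointwise Taylor estimates \eqref{e-taylor1}--\eqref{e-taylor2} you use.

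One point does need to be made explicit. The $A/V$ hypothesis is \emph{one-sided}: the limit is taken as $(b-a)\searrow 0$, so it only controls $A(\g;a,b)$ for $a<b$. The Whitney-field condition at $k=0$, however, must hold for all pairs $a,b\in K$. The paper closes this via the swap estimate Lemma~\ref{l-AVswap}, which bounds $|A(\g;a,b)|$ by a constant times $|A(\g;b,a)|+\alpha(|b-a|)\,|b-a|^m$ and then appeals to the already-established case. In your decomposition you can actually bypass that lemma: once the Taylor remainder $\phi(b)-T_a^m\phi(b)$ (which is $o(|b-a|^m)$ for both orderings) is stripped off, the residual $[h(b)-h(a)]-[\phi(b)-\phi(a)]$ is antisymmetric in $a$ and $b$, so the bound for $a<b$ immediately yields the bound for $a>b$. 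Either route works, but as written your argument does not address this, and it should.
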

\begin{proof}
We will prepare our setting so that we may apply Whitney's classical extension theorem.
Set $H^0 := h$ on $K$,
and, for $1 \leq k \leq m$,
define $H^k = \eta^{(k-1)}|_K$
where $\eta \in C^{m-1}(\R)$ is defined as
\begin{align*}
    \eta = 2(f'g-g'f) \quad \text{ on } \R.
\end{align*}

\noindent \textbf{Claim:}
$H$ is a Whitney field of class $C^m$ on $K$.

Fix $a,b \in K$.
We will first check that 
$|h(b) - T_aH(b)|$
is uniformly $o(|b-a|^m)$ on $K$.
Using Lemma~\ref{l-AVleftinv} and the fact that the group operation in $\H$ is $C^\infty$ smooth,
we may assume that $\gamma(a)=0$.
Recalling the definition \eqref{e-TaylorJet} of the Taylor polynomial of a collection
and that $H^k(a) = \eta^{(k-1)}(a)$ for each $1 \leq k \leq m$,
observe that
\begin{align*}
    T_a H (b) 
    = \int_a^b (T_a^m H)'
    = \int_a^b T_a^{m-1} H^1 
    = \int_a^b T_a^{m-1} \eta
    = 2\int_a^b T_a^{m-1} (f'g) - T_a^{m-1}(fg').
\end{align*}
Here, we used the convention that $T_a^0H^1 = H^1(a)$.
Then
\begin{align*}
    |h(b) - T_aH(b)| 
    &\leq \left| h(b) - 2\int_a^b (T_a^mf)'T_a^mg - T_a^mf(T_a^mg)'\right| \\
    &\quad + 
    2 \int_a^b \left| (T_a^mf)'T_a^mg - T_a^{m-1} (f'g) \right|
    +2 \int_a^b \left|
    T_a^{m-1} (fg')  - T_a^mfT_a^{m-1}(g')\right|.
\end{align*}
Notice that, for any $x$ between $a$ and $b$,
\begin{align*}
    (T_a^mf)'(x)&T_a^mg(x) - T_a^{m-1} (f'g) (x)\\   
    &=
    \left[ T_a^{m-1}(f')(x) T_a^{m-1}g(x) - T_a^{m-1} (f'g)(x) \right]
    + (T_a^mf)'(x) \frac{g^{(m)}(x)}{m!}(x-a)^m,
\end{align*}
and recall that $T_a^{m-1} (f'g)(x)$
is simply the polynomial 
consisting
of those terms in the polynomial $T_a^{m-1}(f')(x) \cdot T_a^{m-1}g(x)$ which have degree at most $m-1$.
Therefore, the quantity in brackets above
is a polynomial in $(x-a)$ whose 
coefficients are 
linear combinations of derivatives of $f$ and $g$
and whose terms have degree at least $m$.
Thus there is a constant $C > 0$ depending only on the derivatives of $f$ and $g$ on $K$ such that
\begin{align*}
    \left|(T_a^mf)'T_a^mg - T_a^{m-1} (f'g)\right|
    \leq C
    |b-a|^m
\end{align*}
between $a$ and $b$. Swapping $f$ and $g$ and repeating the above discussion, we find that
\begin{align}
    |h(b) - T_aH(b)| 
    &\leq 
    \left| h(b) - 2\int_a^b (T_a^mf)'T_a^mg - (T_a^mg)'T_a^mf\right|
    +
    4 C
    |b-a|^{m+1} \nonumber\\
    &= \left| A\left(\g;a,b\right) \right|
    + 4 C
    |b-a|^{m+1}.\label{e-Aswap}
\end{align}
If $a \leq b$, then
$|h(b) - T_aH(b)|$ is uniformly $o(|b-a|^m)$ on $K$
since $\gamma$ satisfies the $A/V$ condition on $K$.
If $a > b$, we choose a modulus of continuity $\alpha$
such that $f$ and $g$ satisfy 
\eqref{e-taylor0}--\eqref{e-taylor2} on an interval containing $K$,
we apply Lemma~\ref{l-AVswap} to \eqref{e-Aswap}, and then we apply the previous sentence.

It remains to check that $|H^k(b) - T_a^{m-k} H^k (b)|$
is uniformly $o(|b-a|^{m-k})$ on $K$ for $1 \leq k \leq m$,
but this follows easily from the definition of $H^k$ since, for such a $k$,
$$
|H^k(b) - T_a^{m-k} H^k (b)|
=
|\eta^{(k-1)}(b) - T_a^{m-k} (\eta^{(k-1)}) (b)|
$$
which is uniformly $o(|b-a|^{(m-1)-(k-1)})$ on $K$
since $\eta$ is of class $C^{m-1}$.
This proves the claim.

Therefore, according to Whitney's classical extension theorem
(Theorem~\ref{t-WhitClassic}),
there is a $C^m$ extension $\hat{h}$ of $H$.
In particular, we have $\hat{h}(x) = H^0(x) = h(x)$ for all $x \in K$,
and, by the definition of $H$,
$$
\hat{h}' =  H^1 = \eta = 2(f'g-g'f) \quad \text{on } K.
$$
This completes the proof of the lemma.
\end{proof}

We will now record a version of the above result for $C^{m,\omega}$ curves
to be used in the proof of Theorem~\ref{t-finiteness}.

\begin{lemma}
\label{l-horizLip}
Suppose $f, g, h \in C^{m,\omega}(\R)$
and $K \subseteq \R$ is compact. 
If $\gamma =(f,g,h)$ 
satisfies the $A/V$ condition on $K$, 
then there is some $\hat{h} \in C^{m,\omega}(\mathbb{R})$
such that $\hat{h}|_K = h$, 
and
$$
    \hat{h}' =  2(f'g-fg') \quad \text{ on } K.
$$
\end{lemma}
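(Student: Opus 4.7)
The plan is to mirror the proof of Lemma~\ref{l-horiz} step-by-step, but to upgrade each little-$o$ estimate to a quantitative $C\omega$-bound so that the jet we construct becomes a Whitney field of class $C^{m,\omega}$ on $K$; the extension $\hat h$ is then produced via Theorem~\ref{t-WhitClassicLip} in place of Theorem~\ref{t-WhitClassic}. As before, assume $K \subseteq [0,1]$, set $\eta = 2(f'g-fg') \in C^{m-1}(\R)$, and define the $m$-jet $H$ by $H^0 = h|_K$ and $H^k = \eta^{(k-1)}|_K$ for $1 \leq k \leq m$. The ``$A/V$ condition'' hypothesis is interpreted as the quantitative bound $|A(\gamma;a,b)| \leq M\omega(b-a)V(\gamma;a,b)$ for $a<b$ in $K$, matching how it is formulated in Theorem~\ref{t-HeisWhitLip} and used downstream in Theorem~\ref{t-finiteness}.

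For the components $k \geq 1$, since $f,g \in C^{m,\omega}(\R)$ have derivatives up through order $m-1$ that are locally bounded, a Leibniz-rule computation on any compact interval $I \supseteq K$ shows that $\eta \in C^{m-1,\omega}(I)$ and hence $\eta^{(k-1)} \in C^{m-k,\omega}(I)$. The standard integral form of the Taylor remainder recorded in \eqref{e-Taylor-int} then gives
\[
\bigl| H^k(b) - T_a^{m-k} H^k(b) \bigr|
=
\bigl| \eta^{(k-1)}(b) - T_a^{m-k}\eta^{(k-1)}(b) \bigr|
\lesssim \omega(|b-a|)\,|b-a|^{m-k}
\]
for $a,b \in K$, which is the desired $C^{m,\omega}$ Whitney field bound in that range.

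The main work is the $k=0$ estimate. Reducing to $\gamma(a)=0$ via Lemma~\ref{l-AVleftinv}, the telescoping decomposition of Lemma~\ref{l-horiz} yields, for $a<b$ in $K$,
\[
|h(b) - T_aH(b)| \leq |A(\gamma;a,b)| + C\,|b-a|^{m+1}
\]
with $C$ depending on $f,g,K$. Since $T_aF'$ and $T_aG'$ are bounded on $[a,b]$ by constants depending on the derivatives of $f,g$ on $K$, the explicit formula for $V$ yields $V(\gamma;a,b) \lesssim_{f,g,K} (b-a)^{m+1}$, and the quantitative $A/V$ hypothesis then gives $|A(\gamma;a,b)| \lesssim \omega(b-a)(b-a)^{m+1}$. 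Dividing through by $(b-a)^m$ produces
\[
\frac{|h(b)-T_aH(b)|}{(b-a)^m} \lesssim \omega(b-a)\,(b-a) + (b-a).
\]
The first term is $\lesssim \omega(b-a)$ since $|b-a| \leq \diam(K)$, and the second is absorbed because concavity of $\omega$ together with $\omega(0)=0$ forces $\omega(t)/t$ to be non-increasing, whence $|b-a| \lesssim_{K,\omega} \omega(|b-a|)$ on $K$. For the case $a>b$, Lemma~\ref{l-AVswap} applied with $\alpha$ a constant multiple of $\omega$ (which $f,g \in C^{m,\omega}$ satisfy by \eqref{e-Taylor-int}) converts the bound for $|A(\gamma;b,a)|$ obtained above into an estimate for $|A(\gamma;a,b)|$ with only an $\omega(|b-a|)|b-a|^m$ error, completing the symmetric bound.

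With $H$ now verified as a Whitney field of class $C^{m,\omega}$ on $K$, Theorem~\ref{t-WhitClassicLip} supplies $\hat h \in C^{m,\omega}(\R)$ with $\hat h^{(k)}|_K = H^k$; in particular $\hat h|_K = h$ and $\hat h' = \eta = 2(f'g-fg')$ on $K$. The main obstacle I anticipate is the $k=0$ step, specifically converting the additive error $|b-a|$ in the quotient into an $\omega(|b-a|)$ term; this is exactly where the concavity of $\omega$ and the compactness of $K$ are essential, and where the argument would break down if one tried to weaken the $A/V$ hypothesis to the purely qualitative limit version.
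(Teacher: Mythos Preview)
Your proposal is correct and follows essentially the same approach as the paper's proof, which simply notes that the argument is ``nearly identical'' to Lemma~\ref{l-horiz} with $\eta \in C^{m-1,\omega}$ supplying the $k \geq 1$ bounds and Theorem~\ref{t-WhitClassicLip} replacing Theorem~\ref{t-WhitClassic}. Your explicit treatment of the $k=0$ case---in particular the quantitative reading of the $A/V$ hypothesis and the concavity argument absorbing the $|b-a|$ error into $\omega(|b-a|)$---fills in details the paper leaves implicit.
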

\begin{proof}
The proof of this lemma is nearly identical to the previous one.
The main difference is that we must use the fact that $\eta$ is now of class $C^{m-1,\omega}$
to conclude that
$$
|H^k(b) - T_a^{m-k} H^k (b)|
=
|\eta^{(k-1)}(b) - T_a^{m-k} (\eta^{(k-1)}) (b)| 
\leq C \omega(|b-a|) |b-a|^{m-k} 
$$
for some constant $C>0$.
Thus, we may apply Theorem~\ref{t-WhitClassicLip} in lieu of Theorem~\ref{t-WhitClassic} to construct a $C^{m,\omega}$ extension $\hat{h}$ of $h$ 
and conclude the lemma.
\end{proof}

\section{The discrete $A/V$ condition}
\label{s-dAV}
\begin{definition}
Fix $E \subseteq \R$ and $\gamma=(f,g,h):E \to \mathbb{R}^3$.
Suppose $X \subseteq E$ with $\# X = m+1$, 
and 
set
$P_f = P(X;f)$ and $P_g = P(X;g)$.
For any $a,b \in X$, define the {\em discrete area discrepancy} $A[X,\g;a,b]$ 
and {\em discrete velocity} $V[X,\g;a,b]$ as follows:
\begin{align*}
A[X,\g;a,b] &= h(b) - h(a) - 2 \int_{a}^{b} (P_f'P_g - P_g'P_f)\\
V[X,\g;a,b] &= \diam(X)^{2m} + \diam(X)^m \int_a^b \left(|P_f'|+ |P_g'|\right).
\end{align*}
\end{definition}
Note in particular that the definitions of $A[X,\g;a,b]$ and $V[X,\g;a,b]$ depend only on the functions $f$, $g$, and $h$ rather than a family of functions (as the definitions of $A(\g;a,b)$ and $V(\g;a,b)$ do).

\begin{definition}
For any set $E \subseteq \mathbb{R}$ with and $\gamma:E \to \mathbb{R}^3$,
say that $\gamma$ satisfies the {\em discrete $A/V$ condition on $E$}
if, for every $\varepsilon > 0$, there is some $\delta > 0$ such that
$$
\frac{A[X,\g ; a,b]}{V[X,\g ; a,b]} < \varepsilon
$$
for any finite set $X \subseteq E$ with $\# X = m+1$ and $\diam X < \delta$ and any $a,b \in X$ with $a<b$.
\end{definition}

Note again that such a map $\gamma$ satisfies the discrete $A/V$ condition vacuously on every finite set $E$.
We once again have a left invariance property for the discrete versions of $A$ and $V$.
\begin{lemma}
\label{l-AVleftinvDisc}
Suppose $\g :K \to \H$ for some $K \subseteq \R$,
and suppose $X \subseteq K$ with $\# X = m+1$.
Fix $a,b \in X$ and $p \in \H$.
Then 
$$
A[X,p *\g;a,b] = A[X,\g;a,b]
\quad
\text{and}
\quad
V[X,p * \g;a,b] = V[X,\g;a,b].
$$
\end{lemma}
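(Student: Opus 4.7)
The plan is to mirror the proof of Lemma~\ref{l-AVleftinv} but exploit a simplification available in the discrete setting: since $a,b \in X$, the interpolating polynomials $P_f$ and $P_g$ agree with $f$ and $g$ at both endpoints, which removes the need to reduce to the case $\gamma(a)=0$.

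First, I would write $p = (x,y,z)$, $\gamma = (f,g,h)$, and set $\hat{\gamma} = p * \gamma = (\hat{f},\hat{g},\hat{h})$, so that
\[
\hat{f} = x + f, \qquad \hat{g} = y + g, \qquad \hat{h} = z + h + 2(yf - xg).
\]
The Newton interpolating polynomial is affine in its defining function: for any constant $c$, both $P(X;f+c)$ and $P(X;f)+c$ are polynomials of degree at most $m$ that agree with $f+c$ on $X$, so by uniqueness they coincide. Hence $P_{\hat{f}} = P_f + x$ and $P_{\hat{g}} = P_g + y$, and their derivatives are unchanged. This gives $V[X,p*\gamma;a,b] = V[X,\gamma;a,b]$ immediately, since $V$ involves only $|P_f'|$ and $|P_g'|$ and the universal factor $\diam(X)$.

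Next, I would expand the area discrepancy using these identities:
\begin{align*}
A[X,\hat\gamma;a,b] &= \hat h(b) - \hat h(a) - 2\int_a^b \bigl(P_f'(P_g + y) - P_g'(P_f + x)\bigr) \\
&= A[X,\gamma;a,b] + 2y\bigl(f(b) - f(a)\bigr) - 2x\bigl(g(b) - g(a)\bigr) \\
&\qquad - 2y\bigl(P_f(b) - P_f(a)\bigr) + 2x\bigl(P_g(b) - P_g(a)\bigr),
\end{align*}
where the $z$ cancels in the difference and the integrals $\int_a^b y P_f'$ and $\int_a^b x P_g'$ collapse via the fundamental theorem of calculus. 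Invoking the interpolation identities $P_f(a)=f(a)$, $P_f(b)=f(b)$, $P_g(a)=g(a)$, $P_g(b)=g(b)$ — which hold precisely because $a,b \in X$ — the four extra terms cancel in pairs, leaving $A[X,p*\gamma;a,b]=A[X,\gamma;a,b]$.

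The main obstacle, which is modest, is just the bookkeeping: one must correctly collect the cross-terms arising from the height modification $\hat h = z + h + 2(yf-xg)$ together with the shifts $P_{\hat f} = P_f + x$ and $P_{\hat g} = P_g + y$. The payoff is conceptual rather than technical: unlike Lemma~\ref{l-AVleftinv}, no normalization to $\gamma(a)=0$ is required, because the interpolation condition at the endpoints handles the cancellation symmetrically.
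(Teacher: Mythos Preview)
Your proof is correct and uses the same two key observations the paper highlights: $P_{\hat f}=P_f+x$, $P_{\hat g}=P_g+y$, and the interpolation identities $P_f(a)=f(a)$, $P_f(b)=f(b)$, $P_g(a)=g(a)$, $P_g(b)=g(b)$ for $a,b\in X$. The only difference is presentational: the paper simply says the argument follows that of Lemma~\ref{l-AVleftinv} almost verbatim, whereas you carry out the computation directly and note that the endpoint interpolation identities make the reduction to $\gamma(a)=0$ unnecessary---a small but pleasant streamlining.
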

\begin{proof}
This lemma follows almost
exactly the same proof as that of Lemma~\ref{l-AVleftinv}
since
$P_{\hat{f}} = P_f + x$ and $P_{\hat{g}} = P_g + y$
and since $P_f(x) = f(x)$ and $P_g(x) = g(x)$ for any $x \in X$ by the definition of the interpolating polynomials.
\end{proof}

\subsection{Equivalence of the conditions for $C^m$ curves}
Here, we will compare the $A/V$ and discrete $A/V$ conditions for $C^m$ curves.

\begin{lemma}
\label{l-AV}
Suppose $\gamma \in C^m(\R,\mathbb{R}^3)$ and $K \subseteq \R$ is a compact set containing at least $m+1$ points.
If $\g$ satisfies the $A/V$ condition on $K$,
then, for every $\varepsilon > 0$, there is some $\delta > 0$ such that
$$
\left| \frac{A(\g;a,b)}{V(\g;a,b)} - \frac{A[X,\g ; a,b]}{V[X,\g ; a,b]}\right| < \varepsilon
$$
for any finite set $X \subseteq K$ with $\# X = m+1$ and $\diam X < \delta$ and any $a,b \in X$ with $a<b$.
In particular, if $\gamma$ satisfies the $A/V$ condition on $K$, then 
$\g$ satisfies the discrete $A/V$ condition on $K$.
\end{lemma}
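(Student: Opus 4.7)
The plan is to compare the interpolating polynomials $P_f = P(X;f)$ and $P_g = P(X;g)$ to the Taylor polynomials $T_a^m f$ and $T_a^m g$, since both are good approximations to $f, g$ on $[\min X, \max X]$. Fix a compact interval $I$ containing $K$, so that $f, g \in C^{m,\alpha}(I)$ for some modulus of continuity $\alpha$; then $f$ and $g$ satisfy \eqref{e-taylor0}--\eqref{e-taylor2} on $I$, and Lemma~\ref{l-poly} applied (after an affine change of variables) on $I$ gives
$$
|P_f - f| \lesssim_{\g,K} \alpha(\diam X)\diam(X)^m
\quad\text{and}\quad
|P_f' - f'| \lesssim_{\g,K} \alpha(\diam X)\diam(X)^{m-1}
$$
on $[\min X, \max X]$, and analogously for $g$. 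Combined with \eqref{e-taylor1}--\eqref{e-taylor2}, the remainders $R_f := P_f - T_a^m f$ and $R_g := P_g - T_a^m g$ satisfy the same order of bounds. By Lemmas~\ref{l-AVleftinv} and \ref{l-AVleftinvDisc}, I may translate for each fixed $(X,a,b)$ so that $\gamma(a)=0$ without altering either quotient; then $h(a)=0$ and the correction terms in $A(\gamma;a,b)$ vanish, leaving
$$
A(\g;a,b) - A[X,\g;a,b] = 2\int_a^b \left[P_f' P_g - P_g' P_f - (T_a^m f)' T_a^m g + (T_a^m g)' T_a^m f\right].
$$

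Next I would use the algebraic identity
$$
\frac{A(\g;a,b)}{V(\g;a,b)} - \frac{A[X,\g;a,b]}{V[X,\g;a,b]}
= \frac{A(\g;a,b)}{V(\g;a,b)} \cdot \frac{V[X,\g;a,b] - V(\g;a,b)}{V[X,\g;a,b]} + \frac{A(\g;a,b) - A[X,\g;a,b]}{V[X,\g;a,b]}.
$$
Since $a, b \in X$ and $\diam X \to 0$ forces $b - a \to 0$, the hypothesis yields $A(\g;a,b)/V(\g;a,b) \to 0$. To dispose of the first product, I would show that $|V[X,\g;a,b] - V(\g;a,b)|/V[X,\g;a,b]$ is bounded: writing $L = b - a$ and $D = \diam X$, the bounds on $R_f', R_g'$ together with $L \leq D$ give $V(\g;a,b) \leq V[X,\g;a,b](1 + C\alpha(D))$, whence the ratio is uniformly bounded for $D$ small.

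The core computation, and main obstacle, is bounding $|A(\g;a,b) - A[X,\g;a,b]|/V[X,\g;a,b]$. Expanding with $P_f = T_a^m f + R_f$ and $P_g = T_a^m g + R_g$ produces a sum of six cross terms in $T_a^m f, T_a^m g, R_f, R_g$ and their derivatives. Using $T_a^m f(a) = T_a^m g(a) = 0$ to estimate $|T_a^m g(x)| \leq \int_a^b |(T_a^m g)'|$ (and symmetrically for $f$), the integrated cross terms are dominated, up to constants, by
$$
\alpha(D) D^m \int_a^b \left( |(T_a^m f)'| + |(T_a^m g)'| \right) + \alpha(D)^2 D^{2m}.
$$
Dividing by $V[X,\g;a,b] \geq D^{2m}$ makes the second summand contribute $O(\alpha(D)^2)$. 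The obstacle is that $L$ can be much smaller than $D$, so $V(\gamma;a,b)$ is generally not a useful denominator; this is resolved by a case analysis comparing $\int_a^b(|(T_a^m f)'| + |(T_a^m g)'|)$ to $2C\alpha(D)D^m$. If it is smaller, the whole numerator is already $O(\alpha(D)^2 D^{2m})$. If larger, the derivative estimate on $R_f, R_g$ forces $\int_a^b(|P_f'|+|P_g'|) \geq \tfrac{1}{2}\int_a^b(|(T_a^m f)'|+|(T_a^m g)'|)$, so $V[X,\g;a,b] \geq \tfrac{1}{2} D^m \int_a^b(|(T_a^m f)'|+|(T_a^m g)'|)$ and the quotient is $O(\alpha(D))$. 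Either way, both pieces of the split tend to zero as $\diam X \to 0$, and the ``in particular'' statement follows at once.
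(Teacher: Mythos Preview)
Your argument is correct and follows the same overall architecture as the paper: reduce to $\gamma(a)=0$ via left-invariance, split
\[
\frac{A}{V}-\frac{A_X}{V_X}=\frac{A}{V}\cdot\frac{V_X-V}{V_X}+\frac{A-A_X}{V_X},
\]
show the first factor is bounded while $A/V\to 0$, and bound $|A-A_X|/V_X$ by something that vanishes with $\diam X$.

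The one genuine difference is in how you handle $|A-A_X|/V_X$. The paper expands
\[
P_f'P_g-(T_a^m f)'T_a^m g
= P_f'(P_g-T_a^m g)+P_g\bigl(P_f'-(T_a^m f)'\bigr)+\bigl((T_a^m f)'-P_f'\bigr)(P_g-T_a^m g),
\]
so the ``main'' factors left over are $P_f'$ and $P_g$. To absorb $\int_a^b|P_g|$ into $V_X$ the paper invokes a Markov-type inequality (Corollary~2.11 of \cite{ZimSpePinWhitney}), namely $\int_a^b|P_g|\le 8m^2(b-a)\int_a^b|P_g'|$, which is available because $P_g(a)=g(a)=0$. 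This yields directly $|A-A_X|\lesssim(\alpha+\alpha^2)V_X$ with no case split. You instead expand so that the main factors are $(T_a^m f)'$ and $T_a^m g$, use $T_a^m g(a)=0$ to get $|T_a^m g|\le\int_a^b|(T_a^m g)'|$, and then run a dichotomy on the size of $\int_a^b(|(T_a^m f)'|+|(T_a^m g)'|)$ to pass from the Taylor integral to the interpolation integral appearing in $V_X$. Your route is more elementary in that it avoids the external Markov inequality; the paper's route is cleaner in that it lands on $V_X$ without a case analysis. (One small caution: your threshold ``$2C\alpha(D)D^m$'' should be taken strictly larger than the bound on $\int_a^b(|R_f'|+|R_g'|)$---say $4C\alpha(D)D^m$---so that the second case really yields $\int_a^b(|P_f'|+|P_g'|)\ge\tfrac12\int_a^b(|(T_a^m f)'|+|(T_a^m g)'|)$; as written the constants don't quite separate, but this is a cosmetic fix.)
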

\begin{proof}
Write $\g = (f,g,h)$.
We may choose a modulus of continuity $\alpha$ and a constant $C>0$
such that
$f$ and $g$ satisfy 
\eqref{e-poly2}
for any
$X \subset I$
with $\#X = m+1$
and
\eqref{e-taylor0}--\eqref{e-taylor2}
for all $x,y \in I$
where $I=[ \min K, \max K]$.

Suppose $X$ is a set of $m+1$ distinct 
points in $K$. Choose $a,b \in X$ with $a <b$.
By Lemmas~\ref{l-AVleftinv} and \ref{l-AVleftinvDisc},
we may assume without loss of generality that $\g(a) = 0$.
For simplicity, 
write $A = A(\g;a,b)$ and $V=V(\g;a,b)$,
and write
$A_X = A[X,\g;a,b]$ and $V_X = V[X,\g;a,b]$.
Then
\begin{equation}
\left| \frac{A}{V} - \frac{A_X}{V_X}\right|
\leq
\left| \frac{A}{V} - \frac{A}{V_X}\right|
+
\left| \frac{A - A_X}{V_X}\right|
=
\left| \frac{A}{V} \right|
\left| \frac{V_X - V}{V_X}\right|
+
\left| \frac{A - A_X}{V_X}\right|.
\label{e-AV}
\end{equation}
Write $\alpha := \alpha(b-a)$.
We will prove that $|(A-A_X)/V_X|$
is bounded by a constant multiple of $\alpha$
and that $|(V_X-V)/V_X|$ is bounded.

To begin, notice that
\begin{align}
\label{e-AminusA}
A - A_X 
= 
2 \int_a^b \left[ (P_f'P_g  - (T_af)'T_ag) + ((T_ag)'T_af - P_g'P_f) \right].
\end{align}
Let us bound the first term in this integrand. Note that
\begin{align*}
P_f'P_g  - (T_af)'T_ag
=
P_f'(P_g - T_ag)
&+P_g(P_f'  - (T_af)')
\\
&\quad +((T_af)' - P_f')(P_g-T_ag).
\end{align*}
Now
Lemma~\ref{l-poly} gives
\begin{align}
\label{e-1st}
|P_g  - T_ag|
\leq
|P_g - g| + |g - T_ag|
\leq
(C+1) \alpha \diam(X)^m
\end{align}
and
\begin{align}
\label{e-2nd}
|P_f' - (T_af)'|
\leq
|P_f' - f'| + |f' - (T_af)'|
\leq
(C+1)\alpha \diam(X)^{m-1}
\end{align}
on $[a,b]$.
Therefore, 
\begin{align}
\label{e-3rd}
|(T_af)' - P_f'||P_g-T_ag| \leq (C+1)^2 \alpha^2 \diam(X)^{2m-1}
\end{align}
on $[a,b]$.
Combining \eqref{e-1st}, \eqref{e-2nd}, and \eqref{e-3rd} gives
\begin{align*}
    \int_a^b |P_f'P_g  - (T_af)'T_ag|
    \lesssim_C 
    \alpha \diam(X)^m \int_a^b |P_f'|
    &+
    \alpha \diam(X)^{m-1} \int_a^b |P_g|\\
    &\quad +
    \alpha^2 \diam(X)^{2m}
\end{align*}
According to Corollary~2.11 in \cite{ZimSpePinWhitney}
applied to the polynomial $P_g$,
we have
$$
\int_a^b |P_g| 
\leq 8m^2(b-a)\int_a^b |P_g'|.
$$
Hence 
\begin{align*}
    \int_a^b |P_f'P_g  - (T_af)'T_ag|
    \lesssim_C
     \alpha^2 \diam(X)^{2m}
     +
     (1+8m^2)\alpha \diam(X)^m \int_a^b |P_f'| + |P_g'|.
\end{align*}
Similar arguments give
\begin{align*}
    \int_a^b |(T_ag)'T_af - P_g'P_f|
    \lesssim_C
    \alpha^2 \diam(X)^{2m}
     +
    (1+8m^2)\alpha \diam(X)^m \int_a^b |P_f'| + |P_g'|,
\end{align*}
and inputting these bounds into \eqref{e-AminusA} gives
\begin{align*}
|A - A_X |
&\lesssim_{m,C}
\alpha^2 \diam(X)^{2m}
     +
    \alpha \diam(X)^m \int_a^b |P_f'| + |P_g'|\\
    &\lesssim_{m,C}
(\alpha^2
     +
    \alpha) V_X.
\end{align*}
This bounds the second term in \eqref{e-AV}.
To bound the first term, notice that
\begin{align*}
    |V_X - V| 
    &\leq 
    \diam(X)^{2m}
    +
    (b-a)^{2m}
    +
    \left|\diam(X)^m - (b-a)^m\right|
    \int_a^b\left(|P_f'| + |P_g'|\right)\\
    & \hspace{1.5in}
    +
    (b-a)^m\int_a^b\left||P_f'| - |(T_af)'| + |P_g'| - |(T_ag)'|\right|.
\end{align*}
As above, we have
\begin{align*}
    \left||P_f'| - |(T_af)'|\right| \leq |P_f' - (T_af)'|
    \leq(C+1)
    \alpha \diam(X)^{m-1}
\end{align*}
and
\begin{align*}
    \left||P_g'| - |(T_ag)'|\right| \leq |P_g' - (T_ag)'|
    \leq(C+1)
    \alpha \diam(X)^{m-1}.
\end{align*}
Hence, the bound $\alpha(b-a) \leq \alpha(\diam K)$ gives
\begin{align*}
    |V_X - V|
    \lesssim_{C,\alpha,K}
    \diam(X)^{2m}
    +
    \diam(X)^{m} \int_a^b\left(|P_f'| + |P_g'|\right)
    = 
    V_X.
\end{align*}
Thus, by \eqref{e-AV},
$$
\left|\frac{A}{V} - \frac{A_X}{V_X} \right|
\lesssim_{m,C,\alpha,K} \left|\frac{A}{V}\right| + \alpha.
$$
Since $\g$ satisfies the $A/V$ condition on $K$, the proof is complete.
\end{proof}

\begin{lemma}
\label{l-AV2}
Suppose $\gamma \in C^m(\R,\mathbb{R}^3)$ 
and $K \subseteq \mathbb{R}$ is a compact set containing at least $m+1$ points
with finitely many isolated points.
If $\gamma$ satisfies the discrete $A/V$ condition on $K$, then 
it satisfies the $A/V$ condition on $K$.

Moreover, if $m=1$, then $K$ can be any compact set containing at least 2 points.
\end{lemma}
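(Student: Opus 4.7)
The plan is to mirror the proof of Lemma~\ref{l-AV}, but invert the comparison between the two ratios. For $a,b\in K$ with $a<b$ and $b-a$ small, I will choose an admissible set $X\subseteq K$ containing $a$ and $b$, and use the algebraic identity
$$
\frac{A}{V} - \frac{A_X}{V_X} = \frac{A-A_X}{V} + \frac{A_X}{V_X}\cdot\frac{V_X - V}{V},
$$
where $A,V,A_X,V_X$ abbreviate the area and velocity quantities from Sections~\ref{s-AV} and \ref{s-dAV}. The estimates $|A-A_X|\lesssim_m (\alpha+\alpha^2)V_X$ and $|V-V_X|\lesssim_m V_X$ (with $\alpha = \alpha(\diam X)$) extracted from the proof of Lemma~\ref{l-AV} hold for any admissible $X$ containing $a,b$. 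What I still need is the reverse comparison $V_X\lesssim_m V$, and since each piece of $V_X$ differs from the corresponding piece of $V$ by an error controlled by $\alpha(\diam X)$, this will follow once I arrange $\diam(X)\leq C(b-a)$ for a universal constant $C$.

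The construction of such an $X$ is the only place the structural assumption on $K$ enters. For $m=1$ the only admissible choice is $X=\{a,b\}$, giving $\diam(X)=b-a$ automatically; this yields the ``moreover'' clause with no assumption on $K$ beyond $\#K\geq 2$. For $m\geq 2$, let $I$ denote the finite set of isolated points of $K$ and $K^* := K\setminus I$; if $K^*$ is empty then $K$ is finite and the $A/V$ condition is vacuous, so assume $K^*\neq\emptyset$. Because $I$ is finite and disjoint from the closed set $K^*$, $\rho := \dist(I,K^*) > 0$. For $b-a$ smaller than both $\rho$ and the smallest isolation radius of points in $I$, the endpoints $a,b$ must lie in $K^*$ (hence are accumulation points of $K$), and every point of $K$ within distance $b-a$ of $a$ automatically lies in $K^*$. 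Infinitely many distinct points of $K$ therefore sit inside $(a-(b-a),\,a+(b-a))\setminus\{a,b\}$; selecting $m-1$ of them and adjoining $a,b$ yields an admissible $X$ with $\diam(X)\leq 2(b-a)$.

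With $\diam(X)\leq 2(b-a)$ in hand, Lemma~\ref{l-poly} combined with \eqref{e-taylor2} gives $|P_f' - (T_a f)'|\lesssim \alpha(\diam X)\diam(X)^{m-1}$ on $[a,b]$ and analogously for $g$. A short computation then upgrades the one-sided bound $V \lesssim_m V_X$ from Lemma~\ref{l-AV} to $V_X \lesssim_m V$ once $b-a$ is small. Substituting into the displayed identity produces
$$
\left|\frac{A}{V}\right| \leq \left|\frac{A}{V} - \frac{A_X}{V_X}\right| + \left|\frac{A_X}{V_X}\right| \lesssim_m \alpha(2(b-a)) + \left|\frac{A_X}{V_X}\right|,
$$
and both terms on the right tend to $0$ as $b-a\to 0$—the first by continuity of $\alpha$ at $0$, and the second by the discrete $A/V$ hypothesis applied to $X$, whose diameter shrinks with $b-a$. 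The one genuine obstacle is the construction of $X$ in the previous paragraph, and the finiteness of the isolated points is essential there: if isolated points of $K$ could accumulate, one could produce pairs $a<b$ of isolated points arbitrarily close together with the next-nearest point of $K$ at distance much larger than $b-a$, forcing $\diam(X) \gg b-a$ for every admissible $X$ and destroying the comparability $V_X\lesssim_m V$ on which the whole argument rests.
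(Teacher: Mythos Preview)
Your proposal is correct and follows essentially the same route as the paper's proof: the same algebraic identity \eqref{e-AV2}, the same construction of an admissible $X$ with $\diam(X)\leq 2(b-a)$ using that at least one of $a,b$ is a limit point when $b-a$ is small, and the same pointwise comparisons between $P_f,P_g$ and $T_af,T_ag$. The only cosmetic difference is that the paper re-derives the bound $|A-A_X|\lesssim(\alpha+\alpha^2)V$ directly by using the symmetric factoring $P_f'P_g-(T_af)'T_ag=(T_af)'(P_g-T_ag)+T_ag(P_f'-(T_af)')+(P_f'-(T_af)')(P_g-T_ag)$ (so that Corollary~2.11 is applied to $T_ag$ rather than $P_g$), whereas you recycle the $V_X$-side estimate from Lemma~\ref{l-AV} and then establish $V_X\lesssim_m V$ separately; both arrive at the same conclusion.
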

\begin{proof}
If $K$ is finite, the definition of the $A/V$ condition is vacuously true.
Otherwise, choose 
a modulus of continuity $\alpha$
as in the proof of Lemma~\ref{l-AV}
which also satisfies
\begin{equation}
\label{e-AV-discrete-ass}    
\frac{A[X,\g ; a,b]}{V[X,\g ; a,b]} \leq  \alpha(\diam(X))
\end{equation}
for all $a,b \in X \subseteq K$ with $a<b$ and $\# X = m+1$.

Let $a,b \in K$ with $a<b$.
By our assumption on $K$, 
we may assume that either $a$ or $b$ is a limit point of $K$.
Thus we can choose a finite set $X$ consisting of $a$, $b$, and $m-1$ other distinct points in $K$ within $(b-a)/2$ of $a$ or $b$.
In particular, $\diam (X) \leq 2(b-a)$.
(When $m=1$, we may skip this argument since $X = \{a,b\}$, and so $\diam(X)=b-a$.
In particular, there is no need to concern ourselves with limit points or isolated points in this case.)

The proof of this lemma will now be nearly identical to that of the previous lemma. 
We will only note the main differences.
Write $A$, $V$, $A_X$, $V_X$, $\alpha$, and $C$ as before.
Also, as before (but slightly differently), write
\begin{align}
\left| \frac{A}{V} - \frac{A_X}{V_X}\right|
\leq
\left| \frac{A_X}{V_X} \right|
\left| \frac{V - V_X}{V}\right|
+
\left| \frac{A - A_X}{V}\right|.
\label{e-AV2}
\end{align}
Again, Lemmas~\ref{l-AVleftinv} and \ref{l-AVleftinvDisc} ensure that
we may assume $\g(a) = 0$.
With \eqref{e-AminusA} in mind, write
\begin{align*}
P_f'P_g  - (T_af)'T_ag
=
(T_af)'(P_g - T_ag)
&+T_ag(P_f'  - (T_af)')
\\
&\quad +(P_f' - (T_af)')(P_g-T_ag).
\end{align*}
By applying Corollary~2.11 in \cite{ZimSpePinWhitney}
to the polynomials $T_ag$ and $T_af$,
we may use \eqref{e-1st}, \eqref{e-2nd}, and \eqref{e-3rd} and 
the fact that $\diam(X) \leq 2(b-a)$
to conclude as before that
\begin{align*}
|A - A_X|
&\lesssim_{m,C}
\alpha^2 (b-a)^{2m}
     +
    \alpha(b-a)^m \int_a^b |T_af'| + |T_ag'|\\
    &\lesssim_{m,C}
(\alpha^2
     +
    \alpha) V.
\end{align*}
Moreover, arguing as above using the fact that $\diam(X) \leq 2(b-a)$, we have
\begin{align*}
    |V - V_X| 
    &\leq 
    (b-a)^{2m}
    +
    \diam(X)^{2m}
    +
    \left|(b-a)^m - \diam(X)^m\right|
    \int_a^b\left(|(T_af)'| + |(T_ag)'|\right)\\
    & \hspace{1.5in}
    +
    \diam(X)^m\int_a^b\left||(T_af)'| - |P_f'|+ |(T_ag)'| - |P_g'| \right|\\
    &\lesssim_{C,m,\alpha,K}
    (b-a)^{2m} + (b-a)^m
    \int_a^b\left(|(T_af)'| + |(T_ag)'|\right) = V.
\end{align*}
By \eqref{e-AV-discrete-ass}, the proof is complete.
\end{proof}

\subsection{Stronger equivalence for $C^{m,\omega}$ curves}

We now record two analogous results which will be important in the proof of the finiteness principle Theorem~\ref{t-finiteness}.
The additional regularity on $|A/V|$ provides more control on the difference between the $A/V$ fractions.

\begin{lemma}
\label{l-AVLip}
Suppose $\gamma \in C^{m,\omega}(\R,\mathbb{R}^3)$, and suppose $K \subseteq \R$ is compact with $\#K \geq m+1$.
Assume that there is a constant $M>0$ such that $|A(\g;a,b)/V(\g;a,b)| \leq M \omega(b-a)$ for all $a,b \in K$ with $a<b$.
Then for any $X \subseteq K$ with $\#X = m+1$ and $a,b \in X$ with $a<b$, we have
$$
\left| \frac{A(\g;a,b)}{V(\g;a,b)} - \frac{A[X,\g ; a,b]}{V[X,\g ; a,b]}\right| 
\leq C_0
\omega(\diam(X))
$$
where $C_0 \geq 1$ is a polynomial combination of $m$, $M$, $\Vert \gamma \Vert_{C^{m,\omega}(\R,\mathbb{R}^3)}$, $\omega(\diam K)$, and $\diam(K)$.
\end{lemma}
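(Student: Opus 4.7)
The plan is to rerun the proof of Lemma~\ref{l-AV} while tracking the quantitative dependence of every constant on $m$, $M$, and $\Vert \gamma \Vert$. Just as there, I reduce to $K \subseteq [0,1]$ and, by Lemmas~\ref{l-AVleftinv} and \ref{l-AVleftinvDisc}, to the case $\gamma(a)=0$. Writing $A,V,A_X,V_X$ as in that proof and $\omega_X := \omega(\diam(X))$, the starting inequality \eqref{e-AV} reads
\begin{equation*}
\left| \frac{A}{V} - \frac{A_X}{V_X}\right| \leq \left|\frac{A}{V}\right| \left|\frac{V_X - V}{V_X}\right| + \left|\frac{A-A_X}{V_X}\right|.
\end{equation*}

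The key quantitative inputs are \eqref{e-Taylor-int}, which gives the Taylor estimates \eqref{e-taylor0}--\eqref{e-taylor2} with the abstract modulus $\alpha$ replaced by a constant multiple of $\omega$ whose constant depends linearly on $\Vert \gamma \Vert$, and Lemma~\ref{l-poly}, which gives the analogous polynomial interpolation estimates with constants polynomial in $m$ and $\Vert \gamma \Vert$. Substituting these bounds (with $\alpha$ replaced by $C\omega_X$) into the chain \eqref{e-1st}--\eqref{e-3rd} and invoking Corollary~2.11 of \cite{ZimSpePinWhitney} exactly as in the proof of Lemma~\ref{l-AV}, I obtain
\begin{equation*}
|A - A_X| \leq C_3 (\omega_X^2 + \omega_X)\, V_X \quad\text{and}\quad |V_X - V| \leq C_3 V_X,
\end{equation*}
where $C_3$ is a polynomial expression in $m$ and $\Vert \gamma \Vert$. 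Since $K$ is compact, $\omega_X \leq \omega(\diam(K))$ is bounded by a constant of the same type, so the quadratic term $\omega_X^2$ is absorbed into a multiple of $\omega_X$.

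Finally, the hypothesis $|A/V| \leq M\omega(b-a) \leq M\omega_X$ (using monotonicity of $\omega$ together with $b-a \leq \diam(X)$) controls the first summand of the splitting by $MC_3\omega_X$, while the second summand is bounded by a constant multiple of $\omega_X$ by the estimates just obtained. Adding these yields the desired inequality with $C_0$ a polynomial combination of $m$, $M$, and $\Vert \gamma \Vert$. The main obstacle is bookkeeping: one must verify that every constant inherits at most polynomial dependence on the named quantities, and that the $\omega_X^2$ contributions (genuinely arising from products of two interpolation/Taylor errors) are absorbed correctly using compactness of $K$. Both points are routine refinements of the qualitative argument for Lemma~\ref{l-AV}.
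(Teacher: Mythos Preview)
Your proposal is correct and follows essentially the same approach as the paper: the paper's own proof simply says to rerun the argument of Lemma~\ref{l-AV} with the modulus $\alpha$ replaced by a constant multiple of $\omega$, and then invoke the hypothesis $|A/V| \leq M\omega(b-a) \leq M\omega(\diam(X))$ to handle the residual $|A/V|$ term. Your write-up adds the useful bookkeeping remark that the $\omega_X^2$ contribution is absorbed into $\omega_X$ via compactness, which the paper leaves implicit.
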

\begin{proof}
The proof of this lemma is then nearly identical to 
that of Lemma~\ref{l-AV}.
Indeed, we may simply replace all instances of $\alpha$ with a constant multiple of $\omega$. 
Our added assumption that $|A/V| \leq M \omega(b-a) \leq M \omega(\diam(X))$
completes the proof.
\end{proof}

The proof of the final lemma follows from the proof of Lemma~\ref{l-AV2} in the same way that the proof of Lemma~\ref{l-AVLip} followed from the proof of Lemma~\ref{l-AV}.

\begin{lemma}
\label{l-AVLip2}
Suppose $\gamma \in C^{m,\omega}(\R,\mathbb{R}^3)$, and suppose $K \subseteq \R$ is compact with finitely many isolated points and $\#K \geq m+1$.
Assume that there is a constant
$M>0$ such that
$|A[Y,\g;a,b]/V[Y,\g;a,b]| \leq M \omega(\diam(Y))$
for all $Y \subseteq K$ with $\# Y = m+1$ and $a,b \in Y$ with $a<b$. 

For any $a,b \in K$ with $a<b$, 
there is a set $X \subseteq K$ containing $a$ and $b$ with $\# X = m+1$ and $\diam(X) \leq 2(b-a)$ 
such that
$$
\left| \frac{A(\g;a,b)}{V(\g;a,b)} - \frac{A[X,\g ; a,b]}{V[X,\g ; a,b]}\right| 
\leq C_1
\omega(b-a)
$$
where $C_1 \geq 1$ is a polynomial combination of $m$, $M$, $\Vert \gamma \Vert_{C^{m,\omega}(\R,\mathbb{R}^3)}$, $\omega(\diam K)$, and $\diam(K)$.
\end{lemma}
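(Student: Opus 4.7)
The plan is to adapt the proof of Lemma~\ref{l-AV2} to the $C^{m,\omega}$ setting in exactly the way that Lemma~\ref{l-AVLip} adapts the proof of Lemma~\ref{l-AV}. Since $\gamma \in C^{m,\omega}(\R,\R^3)$, the integral form of the Taylor remainder \eqref{e-Taylor-int} together with Lemma~\ref{l-poly} allows us to replace the generic modulus of continuity $\alpha$ used throughout the proof of Lemma~\ref{l-AV2} by $C\omega$, where $C$ is a polynomial combination of $m$ and $\Vert\gamma\Vert_{C^{m,\omega}}$. All references to the estimates \eqref{e-taylor0}--\eqref{e-taylor2} and to Lemma~\ref{l-poly} in the proof of Lemma~\ref{l-AV2} therefore go through with $\omega(b-a)$ or $\omega(\diam(X))$ in place of $\alpha(b-a)$ or $\alpha(\diam(X))$.

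More precisely, I would first use the assumption that $K$ has only finitely many isolated points to choose, for the given $a,b \in K$, a set $X \subseteq K$ containing $a,b$ and $m-1$ additional distinct points of $K$ within $(b-a)/2$ of whichever of $a$ or $b$ is a limit point of $K$; this gives $\# X = m+1$ and $\diam(X) \leq 2(b-a)$, exactly as in Lemma~\ref{l-AV2}. I would then invoke Lemmas~\ref{l-AVleftinv} and \ref{l-AVleftinvDisc} to reduce to the case $\gamma(a)=0$, and split
\[
\left| \frac{A(\g;a,b)}{V(\g;a,b)} - \frac{A[X,\g;a,b]}{V[X,\g;a,b]}\right|
\leq \left|\frac{A[X,\g;a,b]}{V[X,\g;a,b]}\right|\left|\frac{V - V_X}{V}\right| + \left|\frac{A - A_X}{V}\right|,
\]
with the same notational conventions as in Lemma~\ref{l-AV2}.

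Reproducing the estimates of that proof line by line, but with $\omega$ in place of $\alpha$, yields the bounds $|A-A_X| \lesssim_{m,\Vert\gamma\Vert} (\omega(b-a)^2 + \omega(b-a))\, V$ and $|V-V_X| \lesssim_m V$. Since $K$ is compact, $\omega$ is bounded on $K-K$, so $\omega(b-a)^2 \lesssim_{\Vert\gamma\Vert} \omega(b-a)$. The hypothesis on discrete $A/V$ ratios gives $|A_X/V_X| \leq M\omega(\diam(X))$, and concavity of $\omega$ with $\omega(0)=0$ yields $\omega(\diam(X)) \leq \omega(2(b-a)) \leq 2\omega(b-a)$. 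Combining these three bounds produces the desired inequality with a constant $C_1$ that is a polynomial combination of $m$, $M$, and $\Vert\gamma\Vert$.

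The main obstacle is purely a bookkeeping one: verifying that every constant appearing in the proof of Lemma~\ref{l-AV2} (the polynomial interpolation constant from Lemma~\ref{l-poly}, the Corollary~2.11 constant $8m^2$ from \cite{ZimSpePinWhitney}, and the constants absorbing $\omega^2$ into $\omega$) depends polynomially on $m$, $M$, and $\Vert\gamma\Vert$. No new analytic idea is required beyond what already appears in the proofs of Lemmas~\ref{l-AV2} and \ref{l-AVLip}; the content is the careful substitution of $C\omega$ for $\alpha$ and the final invocation of concavity to pass from $\omega(\diam(X))$ back to $\omega(b-a)$.
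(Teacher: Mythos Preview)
Your proposal is correct and follows exactly the approach the paper takes: the paper's entire proof of Lemma~\ref{l-AVLip2} is the one-line remark that it follows from the proof of Lemma~\ref{l-AV2} in the same way that Lemma~\ref{l-AVLip} followed from Lemma~\ref{l-AV}, and your write-up spells this out in precisely the intended manner (choice of $X$ with $\diam(X)\le 2(b-a)$, the splitting \eqref{e-AV2}, replacing $\alpha$ by $C\omega$ via \eqref{e-Taylor-int} and Lemma~\ref{l-poly}, and invoking the discrete $A/V$ hypothesis for the $|A_X/V_X|$ term). Your additional remarks on absorbing $\omega^2$ into $\omega$ via compactness and on using concavity to pass from $\omega(\diam(X))$ to $\omega(b-a)$ are correct and make explicit what the paper leaves implicit.
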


\section{Proofs of the main theorems}
\label{s-proofs}

In this section, we will prove Theorems~\ref{c-m1}, \ref{t-supermain}, \ref{t-supermainWf}, and \ref{t-finiteness}.

\subsection{Answering Whitney's question in $\H$ for $n =1$}
We will first observe that the assumptions of Theorems~\ref{c-m1} and \ref{t-supermain} are necessary
even when $K$ is an arbitrary compact set.

\begin{proposition}
If
$\gamma \in C^m_\mathbb{H}(\mathbb{R})$
and $K \subseteq \mathbb{R}$ is compact,
then
\begin{enumerate}
\item \label{nec1}
the $m$th divided differences of $\gamma$ converge uniformly in $K$, and
\item \label{nec3} $\gamma$ satisfies the discrete $A/V$ condition on $K$.
\end{enumerate}
\end{proposition}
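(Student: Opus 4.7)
The plan is to derive both necessary conditions directly from the machinery already set up in the paper. For part (\ref{nec1}), the main tool is the integral formula from Proposition~\ref{p-intgral}: if $\gamma = (f,g,h) \in C^m_\H(\R)$, then each of $f$, $g$, $h$ lies in $C^m(\R)$, so the $m$th divided differences $f[X]$ may be written as an iterated integral of $f^{(m)}$ evaluated at a convex combination of the points of $X$. Since $K$ is compact, $f^{(m)}$ is uniformly continuous on some compact interval $I \supseteq K$, so given $\epsilon > 0$ one picks $\delta > 0$ controlling the modulus of continuity of $f^{(m)}$; for any two sets $X, Y \subseteq K$ of $m+1$ points with $\diam(X \cup Y) < \delta$, the integrands in the representations of $f[X]$ and $f[Y]$ differ pointwise by at most $\epsilon$, giving $|f[X] - f[Y]| \leq \epsilon$. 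The same argument applied coordinate by coordinate to $g$ and $h$ yields uniform convergence of $\gamma[X]$ on $K$.

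For part (\ref{nec3}), I would invoke the previously established Whitney-type theorem for horizontal $C^m$ curves. Since $\gamma \in C^m_\H(\R)$, setting $F^k = f^{(k)}$, $G^k = g^{(k)}$, $H^k = h^{(k)}$ on $K$ produces a Whitney field of class $C^m$ on $K$ satisfying the horizontality relations \eqref{e-LeibnizRule} (both needed for Theorem~\ref{t-HeisWhit}). The ``only if'' direction of Theorem~\ref{t-HeisWhit} then forces $\gamma$ to satisfy the $A/V$ condition on $K$. With this in hand, Lemma~\ref{l-AV} upgrades the $A/V$ condition on $K$ to the discrete $A/V$ condition on $K$, which is exactly (\ref{nec3}).

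There is essentially no genuine obstacle here: the statement is a clean ``necessity'' result whose two parts each reduce to one step of previously proven material (Proposition~\ref{p-intgral} on the classical side, and the chain Theorem~\ref{t-HeisWhit} $\Rightarrow$ Lemma~\ref{l-AV} on the Heisenberg side). The only minor issue worth flagging in the write-up is that one should confirm $K$ contains at least $m+1$ points before applying Lemma~\ref{l-AV}; otherwise the discrete $A/V$ condition is vacuous on $K$, and (\ref{nec3}) holds trivially. With that trivial case dispatched, the two bullets follow in a few lines each.
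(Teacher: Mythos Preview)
Your proposal is correct and follows essentially the same route as the paper: for (\ref{nec3}) both you and the paper invoke Theorem~\ref{t-HeisWhit} to get the $A/V$ condition and then Lemma~\ref{l-AV} to pass to the discrete version, and your remark about the case $\#K \leq m$ being vacuous is apt. The only difference is cosmetic: for (\ref{nec1}) the paper simply cites Theorem~\ref{t-WhitFin}, whereas you unpack that citation via the integral formula of Proposition~\ref{p-intgral} --- exactly the computation already carried out in the proof of Lemma~\ref{l-poly}.
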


\begin{proof}
Condition {\em (\ref{nec1})} follows from Theorem~\ref{t-WhitFin},
and Theorem~\ref{t-HeisWhit} implies that $\gamma$ satisfies the $A/V$ condition on $K$.
Thus Lemma~\ref{l-AV} gives {\em (\ref{nec3})}.
\end{proof}

We will now prove sufficiency in Theorem~\ref{t-supermain}.
Our restriction on the set $K$ appears here 
only because we will be applying Lemma~\ref{l-AV2}.

\begin{theorem}
\label{t-backward}
Suppose $K \subseteq \R$ is compact with finitely many isolated points, and fix $\gamma :K \to \mathbb{H}$.
Assume
\begin{enumerate}
\item
the $m$th divided differences of $\gamma$ converge uniformly in $K$, and
\item $\gamma$ satisfies the discrete $A/V$ condition on $K$.
\end{enumerate}
Then there is a horizontal curve $\Gamma \in C^m(\R,\R^3)$ such that $\Gamma|_K = \gamma$.
\end{theorem}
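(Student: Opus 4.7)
The plan is to reduce to the jet-based horizontal Whitney theorem (Theorem~\ref{t-HeisWhit}) by combining Whitney's classical extension theorem with the discrete-to-continuous conversion supplied by Lemma~\ref{l-AV2}.

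Writing $\g=(f,g,h)$, hypothesis \emph{(1)} and Theorem~\ref{t-WhitFin} applied componentwise produce $\tilde f, \tilde g, \tilde h \in C^m(\R)$ extending $f, g, h$ off $K$, giving a (generally non-horizontal) $C^m$ extension $\tilde\g=(\tilde f,\tilde g,\tilde h)$. Since $A[X,\g;a,b]$ and $V[X,\g;a,b]$ depend only on the values of $\g$ at the points of $X\subseteq K$, the smooth curve $\tilde\g$ inherits the discrete $A/V$ condition from $\g$. Because $K$ has only finitely many isolated points, Lemma~\ref{l-AV2} then yields that $\tilde\g$ satisfies the (continuous) $A/V$ condition on $K$. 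This is the crucial step, and it is where the isolated-points hypothesis is consumed.

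Now set $\eta = 2(\tilde f'\tilde g - \tilde g'\tilde f) \in C^{m-1}(\R)$, and define jets on $K$ by $F=(\tilde f^{(k)}|_K)_{k=0}^m$, $G=(\tilde g^{(k)}|_K)_{k=0}^m$, $H^0=h$, and $H^k=\eta^{(k-1)}|_K$ for $1\le k\le m$. Conditions \emph{(1)} and \emph{(2)} of Theorem~\ref{t-HeisWhit} are automatic: $F$ and $G$ are Whitney fields of class $C^m$ since they come from $C^m$ functions on $\R$, and expanding $\eta^{(k-1)}$ via the Leibniz rule supplies the required identity $H^k = 2\sum_{i=0}^{k-1} \binom{k-1}{i}(F^{k-i}G^i - G^{k-i}F^i)$ on $K$. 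That $H$ is itself a Whitney field of class $C^m$ is precisely the content of the claim inside the proof of Lemma~\ref{l-horiz}, whose verification of the bound on $|h(b)-T_a^m H(b)|$ relies on the $A/V$ condition for $\tilde\g$ established in the previous step. Condition \emph{(3)}, that $(F^0,G^0,H^0)=\g$ satisfies the $A/V$ condition on $K$, is then the very statement already in hand. Applying Theorem~\ref{t-HeisWhit} produces the desired horizontal curve $\Gamma\in C^m(\R,\R^3)$ with $\Gamma|_K=\g$.

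The main obstacle is not in this final assembly but inside Lemma~\ref{l-AV2}, where one must artificially enlarge an arbitrary pair $\{a,b\}\subseteq K$ to a set of $m+1$ points of diameter comparable to $b-a$; this is only possible when at least one of $a, b$ is a limit point of $K$, which explains the isolated-points restriction and thereby the restriction on $K$ in the statement of the theorem.
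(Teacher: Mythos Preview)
Your argument is correct and follows the same route as the paper's proof: extend $\gamma$ to a $C^m$ curve $\tilde\gamma$ via Theorem~\ref{t-WhitFin}, upgrade the discrete $A/V$ condition to the continuous one via Lemma~\ref{l-AV2}, and then feed the resulting jets into Theorem~\ref{t-HeisWhit}. The only cosmetic difference is that the paper invokes Lemma~\ref{l-horiz} as a black box to obtain $\hat h\in C^m(\R)$ (so that $H=(\hat h^{(k)}|_K)$ is automatically a Whitney field), whereas you inline that lemma's construction by defining $H^k=\eta^{(k-1)}|_K$ directly and citing the claim inside its proof; the logical content is identical.
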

\begin{proof}
We may clearly assume that $K$ contains at least $m+1$ points.
By {\em (1)} and Theorem~\ref{t-WhitFin},
there is some $\tilde{\g} =(f,g,h) \in C^m(\R,\R^3)$ such that $\tilde{\g}|_K = \g$. 
By {\em (2)} and Lemma~\ref{l-AV2},
$\tilde{\gamma}$ satisfies the $A/V$ condition on $K$.
Therefore, Lemma~\ref{l-horiz}
implies that there is some $\hat{h} \in C^m(\mathbb{R})$
such that $\hat{h}|_K = h|_K$,
and
    $\hat{h}' =  2(f'g-fg')$ on $K$.
By the Leibniz rule, then, we have
\begin{equation*}
\hat{h}^{(k)} = 2 \sum_{i=0}^{k-1} \binom{k-1}{i}
\left(f^{(k-i)}g^{(i)} - g^{(k-i)}f^{(i)}\right)
\quad
\text{ on } K.
\end{equation*}
    
Set $\hat{\g} = (f,g,\hat{h}):\R \to \mathbb{H}$.
By our above construction, we have that $\hat{\g}|_K = \tilde{\g}|_K=\g$
and that the collection $(F^k,G^k,H^k)_{k=0}^m = \left( \hat{\g}^{(k)} \right)_{k=0}^m$ on $K$ satisfies the assumptions of 
Theorem~\ref{t-HeisWhit}.
Therefore, there is a $C^m$ horizontal curve $\Gamma:\R \to \mathbb{H}$ such that $\Gamma|_K = \hat{\g}|_K = \g$.
\end{proof}

\subsection{The special case $m=1$}
Here we prove Theorem~\ref{c-m1}, which holds for arbitrary compact sets.
\begin{proof}
The necessity follows from Theorem~1.7 in \cite{ZimWhitney}.
Now fix a compact set $K \subseteq \R$ and
a continuous map $\gamma=(f,g,h):K \to \mathbb{R}^3$
such that the Pansu difference quotients of $\gamma$ converge uniformly on $K$ to horizontal points.
That is, the classical Euclidean difference quotients of $f$ and $g$ converge uniformly on $K$
and there is a modulus of continuity $\alpha$ such that
$$
|h(b) - h(a) - 2 (f(b)g(a) - f(a)g(b))| \leq \alpha(|b-a|) |b-a|^2
$$
for all $a,b \in K$.
This implies in particular that
\begin{align*}
    |h(b)-h(a)| &\leq |h(b) - h(a) - 2 (f(b)g(a) - f(a)g(b))|
    +
    2|f(b)g(a) - f(a)g(b)|\\
    &\leq \alpha(|b-a|)|b-a|^2
    + 2 |g(a)||f(b)-f(a)| + 2|f(a)||g(a)-g(b)|\\
    &\lesssim_{f,g} \alpha(|b-a|)\left(|b-a|^2 + |b-a|\right).
\end{align*}
That is, the difference quotients of $h$ converge uniformly on $K$ as well.
In other words, the 1st divided differences of $\gamma$ converge uniformly in $K$,
and so there is some $\tilde{\g} = (\tilde{f},\tilde{g},\tilde{h}) \in C^m(\R,\R^3)$ such that $\tilde{\g}|_K = \g$.

Now suppose $X=\{a,b\} \subseteq K$ with $a<b$, 
and 
set
$P_f = P(X;f)$ and $P_g = P(X;g)$.
Since $P_f$ and $P_g$ are affine,
we have
\begin{align*}
A[X,\g;a,b] 
&= h(b) - h(a) - 2 \int_{a}^{b} (P_f'P_g - P_g'P_f)\\
&= h(b) - h(a) - 2 (f(b)g(a) - f(a)g(b)),\\
V[X,\g;a,b] &= \diam(X)^{2} + \diam(X) \int_a^b \left|\tfrac{f(b)-f(a)}{b-a}\right|+ \left|\tfrac{g(b)-g(a)}{b-a}\right|
\geq \diam(X)^2
= (b-a)^2.
\end{align*}
Therefore, 
since $\tilde{\gamma} = \g$ on $K$,
$$
\left| \frac{A[X,\tilde{\g};a,b] }{V[X,\tilde{\g};a,b]}\right|
=
\left| \frac{A[X,\g;a,b] }{V[X,\g;a,b]}\right|
\leq 
\frac{|h(b) - h(a) - 2 (f(b)g(a) - f(a)g(b))|}{|b-a|^2}
\leq \alpha(|b-a|).
$$
Hence $\tilde{\g}$ satisfies the discrete $A/V$ condition on $K$.
It follows from Lemma~\ref{l-AV2} 
that
$\tilde{\g}$ satisfies the $A/V$ condition on $K$ as well since $m=1$.
Thus Lemma~\ref{l-horiz} gives
an $\hat{h} \in C^1(\R)$ such that 
$\hat{h}|_K = \tilde{h}|_K = h$
and $\hat{h}' = 2(\tilde{f}'\tilde{g}-\tilde{f}\tilde{g}')$ on $K$.
Setting $\hat{\g} = (\tilde{f},\tilde{g},\hat{h})$,
it follows that $\hat{\g}|_K$ and $\hat{\g}'|_K$ 
satisfy the assumptions of Theorem~1.7 in \cite{ZimWhitney}.
We may conclude that there is a horizontal curve $\Gamma \in C^1(\R,\R^3)$ such that $\Gamma|_K = \gamma$.
\end{proof}

\subsection{Answering Whitney's question in $\H$ using $W$}
Here we prove Theorem~\ref{t-supermainWf}.

\begin{proof}
Necessity once again follows from Theorems~\ref{t-WhitFin} and \ref{t-HeisWhit}.
Assume now that $K$ is compact, $\gamma:K \to \H$ is continuous, and the $m$th divided differences of $\g$ converge uniformly on $K$.
In particular, $W\g = (f,g,h)$ exists.
Assume $W\g$ satisfies the $A/V$ condition on $K$.
Lemma~\ref{l-horiz} then gives a function $\hat{h} \in C^m(\R)$ such that $\hat{h}|_K = h|_K$ and $\hat{h}' = 2(f'g-fg')$
on $K$.
Once again defining $\hat{\g} = (f,g,\hat{h})$,
the collection $\left( \hat{\g}^{(k)} \right)_{k=0}^m$ on $K$ satisfies the assumptions of 
Theorem~\ref{t-HeisWhit},
so there is a horizontal curve $\Gamma \in C^m(\R,\R^3)$ such that $\Gamma|_K = \hat{\g}|_K = (W\g)|_K = \g$.
\end{proof}

\subsection{The finiteness principle}
This subsection is devoted to the proof of Theorem~\ref{t-finiteness}.
\begin{proof}
Suppose $K \subseteq \R$ is compact with finitely many isolated points and $\#K \geq m+2$.
Suppose $M>0$ and $\gamma:K\to \R^3$ satisfy the following:
for any $X \subseteq K$ with $\#X = m+2$,
there is a function $\Gamma_X \in C^{m,\omega}(\R,\R^3)$
such that $\Gamma_X = \gamma$ on $X$,
$
\Vert \Gamma_X \Vert_{C^{m,\omega}(\R,\R^3)} \leq M,
$
and
\begin{equation}
\label{e-assump}    
\left| \frac{A(\Gamma_X;a,b)}{V(\Gamma_X;a,b)} \right| \leq M \omega(b-a)
\quad \text{ for all } a,b \in X \text{ with } a<b.
\end{equation}

Suppose $X=\{x_0,\dots,x_{m+1}\}$ is 
a set of $m+2$ distinct points in $K$
with $x_0 < x_1 < \cdots < x_{m+1}$.
Choose $\Gamma_X \in C^{m,\omega}(\R,\R^3)$ as above.
Then
\begin{align*}
    |\gamma[X]| 
    =|\Gamma_X[X]|
    = \frac{|\Gamma_X[x_1,\dots,x_{m+1}] - \Gamma_X[x_0,\dots,x_{m}]|}{|x_{m+1} - x_0|}
    &= \frac{|\Gamma_X^{(m)}(y_1) - \Gamma_X^{(m)}(y_2)|}{(m+1)!|x_{m+1} - x_0|}\\
    &\leq \frac{M}{(m+1)!} \frac{\omega(\diam(X))}{\diam(X)}
\end{align*}
for some $y_1 \in (x_1,x_{m+1})$ and $y_2 \in (x_0,x_{m})$
by the mean value theorem for divided differences.
According to Theorem~A in \cite{BruShv},
there is some $\tilde{\g} =(f,g,h) \in C^{m,\omega}(\R,\R^3)$ such that $\tilde{\g}|_K = \g$.

We will now observe that there is a constant $\tilde{C}>0$ 
depending only on $m$, $M$, $\omega(\diam K)$, and $\diam(K)$
such that $\tilde{\g}$ satisfies 
\begin{align}
\left| \frac{A[Y,\tilde{\g};a,b]}{V[Y,\tilde{\g};a,b]} \right| 
\leq \tilde{C} \omega(\diam(Y)) \label{e-discretechain}
\end{align}
for any $a,b \in Y \subseteq K$ with $a<b$ and $\# Y = m+1$.
Indeed, choose such a set $Y$, set $X = Y \cup \{ x \}$ for some $x \in K \setminus Y$,
and choose $\Gamma_X$ as above.
Since $\Gamma_X = \g = \tilde{\gamma}$ on $Y$,
it follows from 
\eqref{e-assump} and
Lemma~\ref{l-AVLip} that,
for any $a,b \in Y$ with $a<b$,
\begin{align*}
\left| \frac{A[Y,\tilde{\g};a,b]}{V[Y,\tilde{\g};a,b]} \right| 
=
\left| \frac{A[Y,\Gamma_X;a,b]}{V[Y,\Gamma_X;a,b]} \right| 
    &\leq
    \left| \frac{A[Y,\Gamma_X;a,b]}{V[Y,\Gamma_X;a,b]} - \frac{A(\Gamma_X;a,b)}{V(\Gamma_X;a,b)} \right|
    +
    \left| \frac{A(\Gamma_X;a,b)}{V(\Gamma_X;a,b)} \right| \nonumber\\
    &\lesssim_{m,M,\omega,K} \omega(\diam(Y)).
\end{align*}
In particular, $\tilde{\g}$ satisfies the discrete $A/V$ condition on $K$, and thus, 
by Lemma~\ref{l-AV2}, it satisfies the $A/V$ condition on $K$ as well.
Now, by Lemma~\ref{l-horizLip},
there is some
$\hat{h} \in C^{m,\omega}(\mathbb{R})$
such that $\hat{h}|_K = h|_K$ and
$\hat{h}' =  2(f'g-fg')$ on $K$.

Set $\hat{\g} = (f,g,\hat{h})$.
As before, the collection
$\left( \hat{\g}^{(k)} \right)_{k=0}^m$ on $K$
satisfies
conditions {\em (1)} and {\em (2)} from  Theorem~\ref{t-HeisWhitLip}.
We will now verify condition {\em (3)}.
Note that \eqref{e-discretechain} holds with $\hat{\g}$ in place of $\tilde{\g}$
since
$\hat{\g} = \tilde{\g}$ on $K$.
Fix $a,b \in K$ with $a<b$.
According to Lemma~\ref{l-AVLip2},
there is a set $Y \subseteq K$ containing $a$ and $b$ with $\# Y = m+1$
and $\diam(Y) \leq 2(b-a)$ 
such that 
\begin{align*}
\left| \frac{A(\hat{\g};a,b)}{V(\hat{\g};a,b)} \right| 
\leq
\left| \frac{A(\hat{\g};a,b)}{V(\hat{\g};a,b)} - \frac{A[Y,\hat{\g};a,b]}{V[Y,\hat{\g};a,b]} \right| 
+ \left| \frac{A[Y,\hat{\g};a,b]}{V[Y,\hat{\g};a,b]} \right| 
\leq \hat{C} \omega(b-a)
\end{align*}
where $\hat{C}>0$ is a constant depending only on $m$, $M$, $\omega$, $K$, and $\Vert \hat{\gamma} \Vert_{C^{m,\omega}(\R,\R^3)}$.
We may therefore apply Theorem~\ref{t-HeisWhitLip}
to find a horizontal curve $\Gamma \in C^{m,\sqrt{\omega}}(\R,\R^3)$ such that $\Gamma|_K = \hat{\g}|_K = \tilde{\g}|_K = \g$.

\end{proof}

\bibliography{zimbib} 
\bibliographystyle{acm}

\end{document}